\newcommand{\defeq}{\vcentcolon=}
\newcommand{\N}{\mathbb{N}}
\newcommand{\Z}{\mathbb{Z}}
\newcommand{\R}{\mathbb{R}}
\newcommand{\C}{\mathbb{C}}
\newcommand{\Gen}{\mathcal{G}}
\newcommand{\Dom}{\mathcal{D}}
\newcommand{\I}{\mathcal{I}}
\newcommand{\V}{\mathbb{V}}
\newcommand{\imag}{\mathrm{i}}
\newcommand{\Real}{\mathrm{Re}}
\newcommand{\Imag}{\mathrm{Im}}
\newcommand{\comp}{\mathsf{c}}
\newcommand{\Sim}{\mathbb{S}\mathit{(d)}}
\newcommand{\Prob}{\mathbb{P}}
\DeclareMathOperator{\inner}{\mathrm{int}}
\newcommand{\E}{\mathbb{E}}
\newcommand{\A}{\mathcal{A}}
\newcommand{\F}{\mathcal{F}}
\newcommand{\calZ}{\mathcal{Z}}
\newcommand{\1}{\mathbb{1}}
\newcommand{\du}{\mathrm{d} \mathit{u}}
\newcommand{\dx}{\mathrm{d} \mathit{x}}
\begin{document}



\section{Introduction}

Biggins \cite{Biggins:1992} proved local uniform convergence
of additive martingales in a supercritical branching random walk on $\R^d$
at complex parameters within a certain open set $\Lambda \subseteq \C^d$.
He used the results obtained to derive a local large deviation result
for the point process of the positions in the $n$th generation as $n \to \infty$.

In some situations, the arguments from \cite{Biggins:1992} cover parts of the boundary $\partial \Lambda$ of $\Lambda$,
but typically only a proper, possibly empty, subset of $\partial \Lambda$.
However, the ideas and results required to deal with the boundary are available in the literature,
but spread over different papers \cite{Aidekon+Shi:2014,Buraczewski+Kolesko:2014,Iksanov+Meiners:2015}
and not directly applicable.
In this paper, we gather these techniques and results
and provide a complete treatment (up to mild moment assumptions)
of the convergence of additive martingales on the boundary $\partial \Lambda$.

Besides its value in the study of large deviation results for the branching random walk
and its intrinsic interest,
there is further motivation to study the convergence of additive martingales at complex parameters,
particularly on the boundary $\partial \Lambda$.

First, in the recent applied probability literature,
there are several examples of limit theorems, in which the limiting behavior
of a quantity of interest is described by the solution to a complex smoothing equation.
This solution can always be chosen as the limit of a suitable additive martingale at a complex parameter,
see \cite{Meiners+Mentemeier:2017} for a discussion and a collection of examples including fragmentation
processes and P\'olya urns.
Understanding the convergence of additive martingales at the boundary $\partial\Lambda$
is essential for the study of critical smoothing equations.
This is our major motivation for writing the note at hand.

Second,
the additive martingales are intimately connected with cascade measures,
processes that have initially been introduced by Mandelbrot as statistical models for turbulence
\cite{Mandelbrot:1972,Mandelbrot:1974}.
The parameters on the boundary $\partial \Lambda$ correspond to boundaries between different phases
of the cascade model, see  e.g.\ \cite{Barral+al:2010,Madaule+al:2015} and the references therein.

\section{Main results}	\label{sec:main results}

\paragraph{Model description.}

We consider a branching random walk in $\R^d$ where $d \in \N = \{1,2,\ldots\}$.
The process starts with an initial ancestor at the origin.
The ancestor forms generation $0$ of the process
and produces offspring placed on $\R^d$ at the points of a point process $\calZ = \sum_{j=1}^N \delta_{X_j}$
with intensity measure $\mu$.
The children of the ancestor form the first generation of the process.
Each member of the first generation has children with
positions relative to their parent's position given by an independent copy of $\calZ$,
and so on.
We suppose that the branching random walk is supercritical, that is,
$\mu(\R^d) = \E[N] > 1$.

More formally, let $\I := \bigcup_{n \geq 0} \N^n$ be the set of finite tuples of positive integers.
If $u=(u_1,\ldots,u_n) \in \N^n$ and $v=(v_1,\ldots,v_m) \in \N^m$,
we write $u_1 \ldots u_n$ for $u$ and $uv$ for $(u_1,\ldots,u_n,v_1,\ldots,v_m)$.
Further, we write $u|_k$ for $(u_1,\ldots,u_{k \wedge n})$, $k \in \N_0$.

The ancestor is identified with the empty tuple $\varnothing$ and its position is $S(\varnothing) = 0$.
On some probability space $(\Omega,\A,\Prob)$, let $(\calZ(u))_{u \in \I}$ be a family of i.i.d.\ copies of $\calZ$.
For ease of notation, we assume $\calZ(\varnothing)=\calZ$.
We write $\calZ(u) = \sum_{i=1}^{N(u)} X_i(u)$, where $N(u) = \calZ(u)(\R^d)$, $u \in \I$.
Then $\Gen_0 \defeq \{\varnothing\}$ is generation $0$ of the process
and, recursively,
\begin{equation*}
\Gen_{n+1} \defeq \{uj \in \N^{n+1}: u \in \Gen_n \text{ and } 1 \leq j \leq N(u)\}
\end{equation*}
is generation $n+1$ of the process, $n \in \N_0 \defeq \N \cup \{0\}$.
Define the set of all individuals by $\Gen \defeq \bigcup_{n \in \N_0} \Gen_n$.
We write $|u|=n$ for $u \in \Gen_n$ and $|u|<n$ if $u \in \Gen_k$ for some $k<n$. 
The position of an individual $u = u_1 \ldots u_n \in \Gen_n$ is given by
\begin{equation*}
S(u)	\defeq	X_{u_1}(\varnothing) + \ldots + X_{u_n}(u_1 \ldots u_{n-1}).
\end{equation*}
The point process of the $n$th generation people will be denoted by $\calZ_n$, that is,
\begin{equation*}
\calZ_n = \sum_{|u|=n} \delta_{S(u)}.
\end{equation*}
The sequence of point processes $(\calZ_n)_{n \in \N_0}$ is then called {\it branching random walk}.

The multivariate Laplace transform $m$ of $\mu$ is denoted by
\begin{equation*}	\textstyle
m(\lambda) = \int e^{-\lambda x} \, \mu(\dx),
\end{equation*}
where $\lambda \in \C^d$ and $\lambda = \theta + \imag \eta$ with $\theta, \eta \in \R^d$.
(We adopt the convention from \cite{Biggins:1992} and always write $\theta$ for $\Real(\lambda)$ and $\eta$ for $\Imag(\lambda)$.)
We are only interested in those $\lambda$ for which $m(\lambda)$ is well-defined, 
i.\,e., $\lambda$ from the set
\begin{equation*}	\textstyle
\Dom = \{\lambda \in \C^d: m(\lambda) \text{ converges}\} = \{\theta \in \R^d: m(\theta) < \infty\} + \imag \R^d.
\end{equation*}
Throughout, we assume $\inner \Dom \not = \varnothing$.
Let $\F_0$ be the trivial $\sigma$-field and, for $n \in \N$,
\begin{equation*}	\textstyle
\F_n \defeq \sigma(\calZ(u): u \in \N^k \text{ for some } k < n).
\end{equation*}
Then, for $\lambda \in \Dom$ with $m(\lambda) \not = 0$, the family
\begin{equation*}	\textstyle
Z_n(\lambda) = m(\lambda)^{-n} \sum_{|u|=n} e^{-\lambda S(u)},	\quad	n \in \N_0
\end{equation*}
forms a complex martingale with respect to $(\F_n)_{n \in \N_0}$.

\paragraph{Point of departure.}

Biggins \cite[Theorem 1]{Biggins:1992} proved that if
\begin{equation}	\label{eq:gamma moment Z_1(theta)}	\textstyle
\E[Z_1(\theta)^\gamma] < \infty	\quad	\text{for some } \gamma \in (1,2]
\end{equation}
and
\begin{equation}	\label{eq:contraction condition}	\textstyle
\frac{m(p\theta)}{|m(\lambda)|^p} < 1	\quad	\text{for some } p \in (1,\gamma],
\end{equation}
then $(Z_n(\lambda))_{n \geq 0}$ converges almost surely and in $p$th mean to
a limit variable $Z(\lambda)$.
What is more, Biggins \cite[Theorem 2]{Biggins:1992} proved that
this convergence is locally uniform (almost surely and in mean)
on the set $\Lambda = \bigcup_{\gamma \in (1,2]} \Lambda_{\gamma}$
where $\Lambda_\gamma = \Lambda_{\gamma}^1 \cap \Lambda_{\gamma}^3$
and, for $\gamma \in (1,2]$,
\begin{equation*}	\textstyle
\Lambda_{\gamma}^1 =
\inner\{\lambda \in \Dom: \E[Z_1(\theta)^\gamma] < \infty\}
\quad	\text{and}	\quad
\textstyle
\Lambda_{\gamma}^3 =
\inner\big\{\lambda \in \Dom: \inf_{1 \leq p \leq \gamma} \frac{m(p \theta)}{|m(\lambda)|^p} < 1\big\}.
\end{equation*}

\paragraph{The boundary of $\Lambda$.}

We decompose $\partial \Lambda$ into several parts.
The first part is $\partial \Lambda^0 \defeq \partial \Lambda \cap \Dom^\comp$.
Notice that $\partial \Lambda^0$ may be non-empty, see the example in Section \ref{sec:discussion and example}.
The martingale $(Z_n(\lambda))_{n \geq 0}$ is not defined on $\partial \Lambda^0$,
so we will exclude this set from the further discussion.
We introduce a weaker form of \eqref{eq:contraction condition}, namely,
\begin{equation*}	\tag{C1}	\label{eq:characteristic index}	\textstyle
\frac{m(\alpha \theta)}{|m(\lambda)|^\alpha} = 1
\ \text{ and }\ 
\E\big[\sum_{|u|=1} \theta S(u) \frac{e^{-\alpha \theta S(u)}}{|m(\lambda)|^\alpha}\big] \geq -\log (|m(\lambda)|)
\quad	\text{for some } \alpha \in [1,2],
\end{equation*}
and, additionally, the following moment condition:
\begin{equation*}	\tag{C2}	\label{eq:logarithmic moment condition}
\E[|Z_1(\lambda)|^\alpha \log_+^{2+\epsilon} (|Z_1(\lambda)|)] < \infty	\quad	\text{ for some } \epsilon > 0.
\end{equation*}
Subject to the moment condition \eqref{eq:logarithmic moment condition}, there is convergence almost surely and in mean
of the martingales at $\lambda$ from
\begin{equation*}
\partial \Lambda^{(1,2)} \defeq \{\lambda \in \partial \Lambda \cap \Dom: \eqref{eq:characteristic index}
\text{ holds with } \alpha \in (1,2)\},
\end{equation*}
see Theorem \ref{Thm:alpha in (1,2)} below.
On the set
\begin{equation*}
\partial \Lambda^1
\defeq \{\lambda \in \Dom \cap \partial \Lambda: \eqref{eq:characteristic index} \text{ holds with } \alpha = 1\},
\end{equation*}
we have from the first condition in \eqref{eq:characteristic index} with $\alpha=1$
that $m(\theta) = |m(\lambda)|$.
Hence, $Z_1(\lambda) = Z_1(\theta)$ almost surely.
Consequently, $(Z_n(\lambda))_{n \geq 0}$ is a real martingale for $\lambda \in \partial \Lambda^1$.
Whether or not the additive martingale in the branching random walk converges
in the real case is known from Biggins' martingale convergence theorem
\cite{Alsmeyer+Iksanov:2009,Biggins:1977,Lyons:1997}.
We therefore omit the treatment of $\partial \Lambda^1$ in what follows.
Further, typically (see Proposition \ref{Prop:alpha=2} for the details), there is no convergence on
\begin{align*}
\partial \Lambda^2
&\defeq \{\lambda \in \Dom \cap \partial \Lambda: \eqref{eq:characteristic index}
\text{ holds with } \alpha = 2\}	\\
\text{and}	\qquad
\partial \Lambda^3
&\defeq \{\lambda \in \Dom \cap \partial \Lambda: \E[Z_1(\lambda)^\gamma] = \infty \text{ for every } \gamma > 1\}.
\end{align*}
In most situations, it will hold that
\begin{equation}	\label{eq:boundary exhausted}	\textstyle
\partial \Lambda = \partial \Lambda^0 \cup \partial \Lambda^1 \cup \partial \Lambda^{(1,2)} \cup \partial \Lambda^2 \cup \partial \Lambda^3,
\end{equation}
i.e., the sets defined above exhaust $\partial \Lambda$.
There is a discussion including a set of (mild) conditions that ensure \eqref{eq:boundary exhausted} to hold
in Section \ref{sec:discussion and example} below.

\paragraph{Main theorems.}

To unburden the notation, we fix $\lambda \in \Dom$
and set $L(u) \defeq m(\lambda)^{-n} e^{-\lambda S(u)}$ if $u \in \Gen_n$ for some $n \in \N_0$,
and $L(u) \defeq 0$, otherwise.
We write $Z_n$ for $Z_n(\lambda)$, $n \in \N$
and $Z$ for $Z(\lambda)$ if the latter exists.
By construction, $(Z_n)_{n \geq 0}$ is a complex martingale with
\begin{equation}	\label{eq:unit mean} \textstyle
\E[Z_1] = 1.
\end{equation}
To avoid trivialities, we assume that $\Prob(Z_1 = 1) < 1$.
Condition \eqref{eq:characteristic index} in the simplified notation becomes
\begin{equation*}	\tag{\ref{eq:characteristic index}}	\textstyle
\E\big[\sum_{|u|=1} |L(u)|^{\alpha}\big] = 1
\text{ and }
\E\big[\sum_{|u|=1} |L(u)|^{\alpha} \log (|L(u)|) \big] \leq 0
\quad	\text{ for some } \alpha \in [1,2].
\end{equation*}
Condition \eqref{eq:logarithmic moment condition} in the simplified notation reads
\begin{equation*}	\tag{\ref{eq:logarithmic moment condition}}
\E[|Z_1|^\alpha \log_+^{2+\epsilon} (|Z_1|)] < \infty	\quad	\text{ for some } \epsilon > 0.
\end{equation*}
Sometimes, we will refer to the following condition:
\begin{equation*}	\tag{C3}	\label{eq:m(vartheta)<infty}	\textstyle
\E\big[\sum_{|u|=1} |L(u)|^{\vartheta}\big] < \infty	\quad	\text{for some } \vartheta \in [0,\alpha).
\end{equation*}
We further define $W_n \defeq \sum_{|u|=n} |L(u)|^{\alpha}$, $n \in \N_0$.
Then, by \eqref{eq:characteristic index}, $(W_n)_{n \geq 0}$ is a nonnegative martingale.
The martingale convergence theorem and Fatou's lemma give $W_n \to W$ almost surely
for a nonnegative random variable $W$ with $\E[W] \in \{0,1\}$.
Whether $\E[W]=0$ or $\E[W]=1$ is known from Biggins' martingale convergence theorem
\cite{Alsmeyer+Iksanov:2009,Biggins:1977,Lyons:1997}.

The following theorem is the main result of the paper.
It gives convergence of the additive martingales to non-degenerate limits on $\partial \Lambda^{(1,2)}$.

\begin{theorem}	\label{Thm:alpha in (1,2)}
Suppose that \eqref{eq:characteristic index} and \eqref{eq:logarithmic moment condition} hold with $\alpha \in (1,2)$.
Then $(Z_n)_{n \geq 0}$ converges almost surely and in $L^p$ for every $p < \alpha$
to a non-degenerate limit $Z$.
\end{theorem}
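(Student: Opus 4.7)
The plan is to reduce the theorem to a uniform $L^p$-bound for $(Z_n)_{n \geq 0}$ at an arbitrary fixed $p \in (1,\alpha)$. Once $\sup_n \E[|Z_n|^p] < \infty$ is established, Doob's convergence theorem applied to the real and imaginary parts of $(Z_n)$ gives almost sure convergence to some $Z$; since $(|Z_n|^{p'})_{n \geq 0}$ is then uniformly integrable for every $p' < p$, one upgrades to $L^{p'}$-convergence. Letting $p \uparrow \alpha$ yields $L^{p'}$-convergence for every $p' < \alpha$. Non-degeneracy is automatic afterwards: $L^1$-convergence forces $\E[Z] = \E[Z_1] = 1$, and were $Z$ constant almost surely, the fixed-point equation $Z \eqdist \sum_{|u|=1} L(u) Z^{(u)}$ (with $Z^{(u)}$ i.i.d.\ copies of $Z$ independent of the offspring data) would imply $Z_1 = 1$ almost surely, contradicting the standing assumption $\Prob(Z_1 = 1) < 1$.

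The bulk of the work is thus the $L^p$-bound. Following the spinal strategy of \cite{Aidekon+Shi:2014,Iksanov+Meiners:2015}, the first half of \eqref{eq:characteristic index} makes $W_n = \sum_{|u|=n}|L(u)|^\alpha$ a non-negative martingale of mean one, so one may size-bias by setting $\hat{\Prob}|_{\F_n} \defeq W_n \, \Prob|_{\F_n}$. Under $\hat{\Prob}$ the genealogy acquires a distinguished ray $(\xi_n)_{n \geq 0}$ (the spine), along which $V_n \defeq -\log|L(\xi_n)|$ is a random walk whose mean increment equals $-\E\big[\sum_{|u|=1}|L(u)|^\alpha \log|L(u)|\big] \geq 0$, by the second half of \eqref{eq:characteristic index}.

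The $L^p$-bound is then obtained by a barrier truncation of the ancestral lines: for a large parameter $T$, write $Z_n = Z_n^{(T)} + R_n^{(T)}$, where $Z_n^{(T)}$ retains only those $u \in \Gen_n$ all of whose ancestors $u|_k$ satisfy $-\log|L(u|_k)| \geq -T$. On the truncated event, the conditional von Bahr--Esseen inequality applied recursively down the tree controls $\E[|Z_n^{(T)}|^p]$ uniformly in $n$; the truncation absorbs the factor $m_p \defeq \E\big[\sum_{|u|=1}|L(u)|^p\big]$, which on $\partial \Lambda^{(1,2)}$ satisfies $m_p \geq 1$ since $p \mapsto \log m_p$ is convex, vanishes at $p=\alpha$, and has non-positive derivative there by \eqref{eq:characteristic index}. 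The remainder $R_n^{(T)}$ is handled by a many-to-one / spinal argument that bounds $\E[|R_n^{(T)}|]$ by the $\hat{\Prob}$-mass of the spine crossing the barrier, and the log-moment \eqref{eq:logarithmic moment condition} is used to absorb the tails of $|Z_1|$ against this crossing estimate so that $\sup_n \E[|R_n^{(T)}|] \to 0$ as $T \to \infty$.

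The hard step is this barrier estimate. The naive recursion $\E[|Z_n|^p] \leq C \sum_{k=0}^{n-1} m_p^k\, \E[|Z_1-1|^p]$ diverges geometrically because $m_p \geq 1$ throughout $(1,\alpha]$, and the barrier truncation is designed precisely to replace this divergent sum by a convergent one. The price is a ballot- or renewal-type estimate for the spine walk $V_n$, whose drift under $\hat{\Prob}$ is only non-negative rather than strictly positive in the boundary case; the exponent $2+\epsilon$ in \eqref{eq:logarithmic moment condition} is exactly what is needed to integrate the ballot-type tail against the distribution of $|Z_1|$ and thereby obtain $n$-uniform control. Everything else in the plan (a.s.\ convergence, $L^{p'}$-convergence, non-degeneracy) is a soft consequence of this one estimate.
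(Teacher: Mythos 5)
The soft parts of your plan (a.s.\ convergence from a uniform bound via Doob, $L^{p'}$-convergence from uniform integrability, non-degeneracy via the fixed-point equation and $\E[Z]=1$) are fine and are an acceptable alternative to the paper's route, which instead obtains a.s.\ convergence from the exhaustion by the barrier-truncated martingales and then deduces $L^p$-convergence from an explicit tail bound on $|Z_n-1|$. The load-bearing step, however, has a genuine gap. A conditional von Bahr--Esseen recursion at exponent $p<\alpha$, applied to the barrier-truncated martingale and pushed through the many-to-one change of measure, yields the bound
\begin{equation*}
\E\big[|Z_n^{(t)}-1|^p\big] \ \lesssim\ \E[|Z_1-1|^p]\,\sum_{k\geq 0}\E\Big[e^{(\alpha-p)S_k}\,\1_{\{S_j\geq -\log t,\,j\leq k\}}\Big],
\end{equation*}
where $(S_k)$ is the $\alpha$-tilted walk with $\E[S_1]\geq 0$. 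The factor $e^{(\alpha-p)S_k}$ is produced precisely by tilting at the sub-stable exponent $p<\alpha$, and a \emph{lower} barrier on $S_k$ is useless against it: the barrier cuts off small values of $S_k$, while the divergence comes from large values. In fact, with no truncation the $k$th term is exactly $m_p^k\geq 1$, and the truncation can only reduce this to $\E[e^{(\alpha-p)S_k}\1_{\{S_j\geq -\log t\}}]$, which is still not summable; for the extreme example $|L(u)|\equiv N^{-1/\alpha}$ (so $S_k = (k/\alpha)\log N$ deterministically) the barrier indicator is identically $1$ and $\sum_k m_p^k$ diverges geometrically. So the claim that ``the truncation absorbs the factor $m_p\geq 1$'' does not hold: it absorbs nothing.

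The paper escapes this by changing the Orlicz function, not just truncating. After the barrier truncation, one applies the Topchi{\u\i}--Vatutin inequality (the Orlicz-function analogue of von Bahr--Esseen) twice with $\phi(x)=x^\alpha\ell(x)$, where $\ell$ is a carefully built slowly varying function of logarithmic order $\delta=1+\epsilon/2$ (Lemma~\ref{Lem:TV function}). Because $\phi$ sits exactly at the stable exponent $\alpha$, the many-to-one reduction leaves no exponentially growing tilt: the random-walk term becomes $\sum_k\E[\ell(e^{-S_k})\1_{\{S_j\geq -\log t\}}]$, and since $\ell(e^{-x})\sim c^{-1}x^{-1-\epsilon/2}$ this is controllable. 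Its finiteness is then shown via a ladder-epoch decomposition of the walk together with the duality lemma, using that $x\mapsto\ell(e^{-x})\1_{[0,\infty)}$ is \emph{directly Riemann integrable} precisely because $\delta>1$. This is also what the exponent ``$2+\epsilon$'' in \eqref{eq:logarithmic moment condition} buys you --- not a ballot-type absorption against $|Z_1|$ as you suggest, but the dRi calibration of $\ell$ (the remaining $1+\epsilon/2$ in $2+\epsilon=\alpha\cdot 0 + (2+\epsilon)$ feeds the $\phi\ell$-integrability constant $C_{\phi\ell}$). In short: truncation handles the event $\{\sup_u|L(u)|>t\}$, but without replacing $|x|^p$ by an Orlicz function pinned at $x^\alpha\ell(x)$, the recursion still diverges.
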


The following propositions are essentially contained in \cite{Iksanov+Meiners:2015}
and provide sufficient conditions for the divergence of the additive martingales on $\partial \Lambda^2$ and $\partial \Lambda^3$, respectively.

\begin{proposition}	\label{Prop:alpha=2}
Suppose that $\Prob(N < \infty)=1$
and that \eqref{eq:characteristic index} holds with $\alpha = 2$.
Then each of the following two conditions is sufficient for $(Z_n)_{n \geq 0}$
not to converge in probability.
\begin{itemize}
	\item[(i)]
		$\E[\sum_{|u|=1} |L(u)|^{2} \log (|L(u)|)] \in (-\infty,0)$
		and $\E[W_1 \log_+ W_1] < \infty$,
	\item[(ii)]
		$\E[\sum_{|u|=1} |L(u)|^{2} \log (|L(u)|)] = 0$, \eqref{eq:m(vartheta)<infty} holds
		and
		\begin{align*}	\textstyle
		\E[\sum_{|u|=1} |L(u)|^2 \log^2 (|L(u)|)]<\infty,\  
		\E[W_1 \log_+^2 (W_1)]<\infty
		\text{ and }
		\E[\widetilde{W_1} \log_+ (\widetilde{W_1})]<\infty
		\end{align*}
		where $\widetilde{W}_1 \defeq \sum_{|u|=1} |L(u)|^2 \log_- (|L(u)|)$.
\end{itemize}
\end{proposition}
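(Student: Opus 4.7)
In both cases, the plan is to apply a martingale central limit theorem to the increments $D_n := Z_n - Z_{n-1}$. Decomposing at generation $n-1$,
\[
D_n \;=\; \sum_{|u|=n-1} L(u)\bigl(Z_1^{(u)} - 1\bigr),
\]
where the $Z_1^{(u)}$ are i.i.d.\ copies of $Z_1$ independent of $\F_{n-1}$. Setting $\sigma^2 := \E|Z_1 - 1|^2$ (positive by $\Prob(Z_1=1)<1$, and finite under the hypotheses), one obtains $\E[|D_n|^2\,|\,\F_{n-1}] = \sigma^2 W_{n-1}$ and therefore the conditional quadratic variation $\langle Z\rangle_n = \sigma^2 \sum_{k=0}^{n-1} W_k$.

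\emph{Case (i).} The conditions $\E[\sum_{|u|=1}|L(u)|^2\log|L(u)|]<0$ and $\E[W_1\log_+ W_1]<\infty$ are precisely those of Biggins' martingale convergence theorem for $(W_n)$ in its $L^1$-convergent regime, so $W_n \to W$ a.s.\ and in $L^1$, with $\E W = 1$ and $W > 0$ a.s.\ on the survival event $\Surv$. Hence $\langle Z \rangle_n/n \to \sigma^2 W$ a.s. A conditional martingale CLT then shows that $Z_n/\sqrt n$ converges in distribution to $\sigma\sqrt{W}\,G$ for a standard complex Gaussian $G$ independent of $W$. This distributional limit is non-degenerate on $\Surv$; in particular $Z_n/\sqrt n$ does not converge to zero in probability, and so $(Z_n)$ cannot converge in probability to a finite limit.

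\emph{Case (ii).} The boundary identity $\E[\sum_{|u|=1}|L(u)|^2\log|L(u)|] = 0$ puts $(W_n)$ at criticality; hence $W_n \to 0$ a.s., and the case-(i) scaling collapses. The substitute, available under the moment hypotheses of (ii) together with \eqref{eq:m(vartheta)<infty}, is provided by the critical-BRW theory of \cite{Aidekon+Shi:2014,Buraczewski+Kolesko:2014,Iksanov+Meiners:2015}: the derivative martingale
\[
\partial W_n \;:=\; -\!\!\sum_{|u|=n}\log(|L(u)|^2)\,|L(u)|^2
\]
converges almost surely to a limit $\partial_\infty$ that is strictly positive a.s.\ on $\Surv$, and there is a Seneta--Heyde asymptotic $\sqrt n\,W_n \to c\,\partial_\infty$ in probability for a deterministic $c > 0$. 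Therefore $\langle Z\rangle_n/\sqrt n \to 2c\sigma^2\,\partial_\infty$ on $\Surv$, and an analogous conditional martingale CLT gives $Z_n/n^{1/4}$ converging in distribution to a non-degenerate complex random variable (conditionally Gaussian with variance proportional to $\partial_\infty$). If $(Z_n)$ converged in probability to a finite limit, then $Z_n/n^{1/4}$ would converge to zero in probability, contradicting this non-degenerate distributional limit.

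The principal obstacle in either case is the verification of the conditional Lindeberg condition underlying the martingale CLT; this requires uniform control of $\max_{|u|=k-1}|L(u)|/\sqrt{W_{k-1}}$ along the triangular array $(D_k)_{k \leq n}$. The more delicate instance is case (ii): there the log-moment hypotheses and \eqref{eq:m(vartheta)<infty} are tuned precisely to secure these estimates, and the necessary technical lemmas can be imported from \cite{Iksanov+Meiners:2015}.
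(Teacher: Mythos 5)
Your approach is genuinely different from the paper's and, unfortunately, it has a real gap. You define $\sigma^2 := \E|Z_1-1|^2$ and state parenthetically that it is ``finite under the hypotheses,'' but none of the assumptions in Proposition \ref{Prop:alpha=2} imply $\E|Z_1|^2 < \infty$. Condition \eqref{eq:characteristic index} with $\alpha=2$ only gives $\E\big[\sum_{|u|=1}|L(u)|^2\big]=1$; the quantity $\E|Z_1|^2 = \E\big|\sum_{|u|=1}L(u)\big|^2$ involves cross terms and may well diverge. None of the log-moment conditions in (i) or (ii), nor \eqref{eq:m(vartheta)<infty}, rescue this: they control $W_1$ and $|L(u)|$, not $|Z_1|$. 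Even if you invoke Lemma \ref{Lem:tail bounds} under the contradiction hypothesis $Z_n\to Z$ in probability, it only yields $\E|Z_1|^p < \infty$ for $p<\alpha=2$, still short of the variance you need. With $\sigma^2$ possibly infinite, the conditional quadratic variation $\langle Z\rangle_n = \sigma^2\sum_{k<n}W_k$ is ill-defined and the martingale CLT does not apply.

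The paper's proof deliberately stays below the second moment for precisely this reason. It argues by contradiction: if $Z_n\to Z$ in probability, Lemma \ref{Lem:tail bounds} gives $\E|Z|^p<\infty$ for all $p\in(1,2)$, and hence $\E|Z_n-Z|^p\to 0$. It then applies the Burkholder--Davis--Gundy inequality at the level of $p$th powers (twice, first in $n$, then conditionally in $k$) combined with Jensen, obtaining the lower bound displayed in \eqref{eq:lower bound E[|Z_n-1|^p]}, which involves $\E[|Z_1-1|^p]$ and $n^{p/2-1}\sum_{k<n}\E[W_k^{p/2}]$. Under (i) this sum is of order $n$ (so the bound grows like $n^{p/2}$); under (ii), by the Aidekon--Shi Seneta--Heyde normalization, $n^{p/4}W_n^{p/2}$ is uniformly integrable with a non-degenerate distributional limit, and the bound grows like $n^{p/4}$. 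Either way $\E|Z_n-1|^p\to\infty$, contradicting $L^p$-convergence. This route requires no second moment of $Z_1$ and no Lindeberg verification, both of which are the weak points of your CLT-based argument. If you want to pursue the CLT idea, you would need to replace $\sigma^2$ by a truncated or $p$th-moment surrogate and then establish the Lindeberg condition under the stated hypotheses, which is substantial additional work not covered by citing \cite{Iksanov+Meiners:2015}.
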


\begin{proposition}	\label{Prop:E[Z_1^gamma]=infty f.a. gamma>1}
Suppose that $\Prob(N < \infty)=1$ and that \eqref{eq:characteristic index} with $\alpha > 1$
and \eqref{eq:m(vartheta)<infty} hold.
If $\E[|Z_1|^p] = \infty$ for some $p \in [1,\alpha)$, then $(Z_n)_{n \geq 0}$ does not converge in probability.
\end{proposition}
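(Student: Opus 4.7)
I would argue by contradiction: suppose $Z_n \to Z$ in probability for some $\C$-valued random variable $Z$. The first move is to extract a functional equation for $Z$ from the branching identity
\[
Z_{n+1} \;=\; \sum_{|u|=1} L(u) \, Z_n^{(u)},
\]
where, conditionally on $\F_1$, $(Z_n^{(u)})_{|u|=1}$ are i.i.d.\ copies of $Z_n$. Since $\Prob(N<\infty)=1$, the sum is almost surely finite, so convergence of each factor in probability passes through to the sum and yields
\[
Z \;=\; \sum_{|u|=1} L(u) \, Z^{(u)} \qquad \text{almost surely,}
\]
with $(Z^{(u)})_{|u|=1}$ i.i.d.\ copies of $Z$ independent of $\F_1$; iterating, the same identity holds with the summation over $|u|=n$ and independence from $\F_n$.

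The central step is a moment bound: under \eqref{eq:characteristic index} with $\alpha>1$ and \eqref{eq:m(vartheta)<infty}, any random variable $Z$ solving this fixed-point equation satisfies $\E[|Z|^q]<\infty$ for every $q\in[1,\alpha)$. Estimates of this form for fixed points of smoothing transforms are the technical heart of the argument and are essentially contained in \cite{Iksanov+Meiners:2015}. Their proofs use the functional equation combined with a truncation-and-bootstrap scheme: condition \eqref{eq:m(vartheta)<infty} seeds a moment below the critical exponent, which can then be lifted up towards $\alpha$ using the first part of \eqref{eq:characteristic index}. I expect this to be the main obstacle, since no a priori $L^q$-control of $(Z_n)$ is available from convergence in probability alone.

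Granted the moment bound, I would condition the fixed-point identity on $\F_1$ and use independence of the $Z^{(u)}$ from $\F_1$ to get
\[
\E[Z \mid \F_1] \;=\; \sum_{|u|=1} L(u) \, \E[Z^{(u)}] \;=\; Z_1 \, \E[Z].
\]
If $\E[Z]\neq0$, the conditional Jensen inequality for $|\cdot|^p$ yields
\[
|\E[Z]|^p \, \E[|Z_1|^p] \;=\; \E\bigl[\,|\E[Z\mid\F_1]|^p\,\bigr] \;\leq\; \E[|Z|^p] \;<\; \infty,
\]
in direct contradiction with $\E[|Z_1|^p]=\infty$.

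It remains to rule out $\E[Z]=0$. In that case the iterated identity gives $\E[Z\mid\F_n]=Z_n\E[Z]=0$ for every $n$, and since $Z$ is integrable and $\F_\infty$-measurable (being a limit in probability of $\F_n$-measurable random variables), L\'evy's upward theorem forces $Z=0$ almost surely, so that $Z_n\to0$ in probability. This residual possibility is excluded by using the branching identity to transfer the heavy tail of $Z_1$ (implied by $\E[|Z_1|^p]=\infty$) into a uniform-in-$n$ lower bound on $\Prob(|Z_n|>t)$ for large $t$, contradicting tightness of $(Z_n)$; this tail-transfer argument is again of the type developed in \cite{Iksanov+Meiners:2015} and closes the proof.
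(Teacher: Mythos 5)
Your skeleton agrees with the paper's: argue by contraposition, derive the endogenous fixed-point equation \eqref{eq:endogenous FPE} for the in-probability limit $Z$, prove that $Z$ has finite moments of every order $q<\alpha$, and deduce $\E[|Z_1|^p]<\infty$ from $\E[Z\mid\F_1]=Z_1\,\E[Z]$ and conditional Jensen. The paper's proof is literally a contraposition of Lemma~\ref{Lem:tail bounds}, which packages exactly these steps.

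However, your account of the central moment bound is not correct and, as described, would not work. Condition \eqref{eq:m(vartheta)<infty} is a moment condition on the weights $|L(u)|$, not on $Z$ or on $Z_1$, so it ``seeds'' no moment of $Z$; indeed, for $\lambda\in\partial\Lambda^3$ no $Z_n$ has any finite moment of order $>1$, so there is nothing to bootstrap from (you are right that convergence in probability provides no a priori $L^q$-control, which is precisely why a seed is unavailable). What Lemma~\ref{Lem:tail bounds} actually does, following \cite{Iksanov+Meiners:2015,Alsmeyer+Meiners:2013}, is compare the tail $\Prob(|Z|>t)$ with $1-\varphi(t^{-\alpha})$, where $\varphi$ is the Laplace transform of a nontrivial nonnegative fixed point of the $\alpha$-tilted smoothing transform, satisfying \eqref{eq:functional equation}, with $1-\varphi$ regularly varying of index $1$ at $0$. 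Here \eqref{eq:m(vartheta)<infty} enters only through the fixed-point theory of \cite{Alsmeyer+al:2012} to guarantee that such a $\varphi$ exists; the comparison itself is carried out with complex multiplicative martingales and the theory of independent infinitesimal triangular arrays, which is also where $\Prob(N<\infty)=1$ is used (see Remark~\ref{Rem:N<infty}). That same analysis is what rules out degeneracy of $Z$, so your separate case $\E[Z]=0$ and the unspecified ``tail-transfer'' you invoke to close it are not how the references proceed either. The high-level plan is right, but the mechanism you propose for its one hard step is not the one that works, and you have not supplied an alternative.
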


\begin{remark}	\label{Rem:N<infty}
In both propositions, we require $\Prob(N<\infty)=1$.
This is because their proofs are based on arguments from \cite{Alsmeyer+Meiners:2013,Iksanov+Meiners:2015}
involving complex multiplicative martingales and convergence of triangular arrays.
It may be possible, but certainly tedious, to extend those arguments to the case $\Prob(N=\infty)>0$.
As we want to keep the presentation short and accessible,
we refrain from trying to remove the assumption.
\end{remark}

The rest of the paper is organized as follows.
In Section \ref{sec:discussion and example},
we give a brief discussion of the shape of $\Lambda$, its boundary
and the parts in which the boundary can be divided.
We further give an example to illustrate our results.
Section \ref{sec:proofs} contains the proofs of our results,
while Section \ref{sec:similarities} contains extensions of the main results
to a more general, multidimensional situation.
Finally, there is an appendix comprising an auxiliary result required in the proof of Theorem \ref{Thm:alpha in (1,2)}.

\section{Discussion and examples}	\label{sec:discussion and example}

It is illustrative to first consider examples.

\paragraph{Examples.}

We begin with an example which strongly 	reminiscent of the situation studied in \cite{Madaule+al:2015}.
We also refer to \cite{Lacoin+al:2015}, where the problem of convergence on $\Lambda_{\gamma}^{(1,2)}$ is studied
in the different context of Gaussian multiplicative chaos.

\begin{example}[The Gaussian case with binary splitting]
Consider a branching random walk with independent standard Gaussian increments
and binary splitting, i.\,e., $\calZ = \delta_{X_1}+\delta_{X_2}$
where $X_1,X_2$ are i.i.d.\ random variables with standard normal laws.
Then $m(\lambda) = 2 \exp(\lambda^2/2)$ for all $\lambda \in \C$.
For every $\theta \in \R$ and every $\gamma > 1$, we have
\begin{equation*}	\textstyle
\E[Z_1(\theta)^\gamma] = \frac{1}{m(\theta)^{\gamma}} \E[(e^{-\theta X_1} + e^{-\theta X_2})^\gamma]
\leq \frac{2^{\gamma}}{m(\theta)^{\gamma}} \E[e^{-\theta \gamma X_1}] = \frac{2^{\gamma}m(\theta \gamma)}{m(\theta)^\gamma} < \infty.
\end{equation*}
Hence
$\Lambda = \{\lambda \in \C: m(p\theta)/|m(\lambda)|^p < 1 \text{ for some } p \in (1,2]\}$.
Thus, $\lambda \in \Lambda$ if and only if there exists some $p \in (1,2]$ with $m(p\theta)/|m(\lambda)|^p < 1$.
The latter inequality is equivalent to
\begin{equation}	\label{eq:binary Gaussian condition}	\textstyle
(1-p) 2 \log 2 + p^2 \theta^2 - p(\theta^2-\eta^2) < 0.
\end{equation}
By symmetry, it suffices to consider $\theta, \eta \geq 0$ only.
Next notice that $\sup\{\theta: \lambda \in \Lambda\} = \sqrt{2 \log 2}$.
For fixed $\theta \in [0,\sqrt{2 \log 2}]$, making \eqref{eq:binary Gaussian condition} explicit in $\eta^2$ gives:
\begin{equation*}	\label{eq:binary Gaussian condition explicit in eta}	\textstyle
\eta^2 < \frac{p-1}{p} 2 \log 2 - (p-1) \theta^2.
\end{equation*}
The right-hand side assumes its maximum (as a function of $p \in (1,2]$)
at $p = (\sqrt{2 \log 2}/\theta) \wedge 2$
giving $\eta < \sqrt{2 \log 2} - \theta$ for all $\sqrt{\log 2}/\sqrt{2} \leq \theta < \sqrt{2 \log 2}$.
For $0 \leq \theta \leq \sqrt{(\log 2)/2}$, we get $\theta^2+\eta^2 < \log 2$.
In conclusion, we get the shape depicted in Figure \ref{fig:Gaussian Lambda} for $\Lambda$.
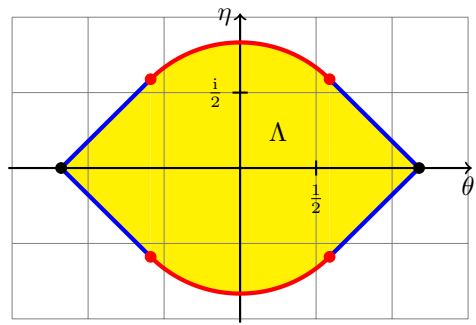
\begin{figure}[h]
\begin{center}
\begin{tikzpicture}[scale=1]
		\fill[yellow] plot[domain=-2*sqrt(2*ln(2)):-2*sqrt(ln(2)/2)] (\x, {2*sqrt(2*ln(2))+\x)})
					-- plot[domain=-2*sqrt(ln(2)/2):-2*sqrt(2*ln(2))] (\x, {-2*sqrt(2*ln(2))-\x)});
		\fill[yellow] plot[domain=2*sqrt(ln(2))/sqrt(2):2*sqrt(2*ln(2))] (\x, {2*sqrt(2*ln(2))-\x)})
					-- plot[domain=2*sqrt(2*ln(2)):2*sqrt(ln(2))/sqrt(2)] (\x, {-2*sqrt(2*ln(2))+\x)});
		\fill[yellow] plot[domain=-2*sqrt(ln(2)/2):2*sqrt(ln(2)/2)] (\x, {2*sqrt(ln(2)-(\x/2)*(\x/2))})
					-- plot[domain=2*sqrt(ln(2)/2):-2*sqrt(ln(2)/2)] (\x, {-2*sqrt(ln(2)-(\x/2)*(\x/2))});
		\draw [help lines] (-3,-2) grid (3,2);
		\draw [thick,->] (0,-2.05) -- (0,2.05);
		\draw [thick,->] (-3.05,0) -- (3.05,0);
		\draw (3,0) node[below]{$\theta$};
		\draw (0,2) node[left]{$\eta$};
		\draw (0.5,0.25) node[above]{$\Lambda$};
		\draw [ultra thick, domain=-2*sqrt(ln(2)/2):2*sqrt(ln(2)/2), samples=128, color=red] plot(\x, {2*sqrt(ln(2)-(\x/2)*(\x/2))});
		\draw [ultra thick, domain=-2*sqrt(ln(2)/2):2*sqrt(ln(2)/2), samples=128, color=red] plot(\x, {-2*sqrt(ln(2)-(\x/2)*(\x/2))});
		\draw [ultra thick, domain=2*sqrt(ln(2))/sqrt(2):2*sqrt(2*ln(2)), samples=128, color=blue] plot(\x, {2*sqrt(2*ln(2))-\x)});
		\draw [ultra thick, domain=2*sqrt(ln(2))/sqrt(2):2*sqrt(2*ln(2)), samples=128, color=blue] plot(\x, {-2*sqrt(2*ln(2))+\x)});
		\draw [ultra thick, domain=-2*sqrt(2*ln(2)):-2*sqrt(ln(2))/sqrt(2), samples=128,color=blue] plot(\x, {2*sqrt(2*ln(2))+\x)});
		\draw [ultra thick, domain=-2*sqrt(2*ln(2)):-2*sqrt(ln(2))/sqrt(2), samples=128, color=blue] plot(\x, {-2*sqrt(2*ln(2))-\x)});
		\filldraw[color=black] (2.35482,0) circle (2pt);
		\filldraw[color=black] (-2.35482,0) circle (2pt);
		\filldraw[color=red] (1.17741,1.17741) circle (2pt);
		\filldraw[color=red] (1.17741,-1.17741) circle (2pt);
		\filldraw[color=red] (-1.17741,1.17741) circle (2pt);
		\filldraw[color=red] (-1.17741,-1.17741) circle (2pt);
		\draw[thick] (1,0.1) --(1,-0.1) node[below]{\small $\frac12$};
		\draw[thick] (0.1,1) -- (-0.1,1) node[left]{\small $\frac\imag2$};
\end{tikzpicture}
\caption{The figure shows $\Lambda$ (in yellow) and $\partial \Lambda$ (in red, blue and with two black dots)
for the branching random walk with binary splitting and independent standard Gaussian increments.
Convergence of the additive martingales for $\lambda$ from the yellow phase follows from \cite[Theorem 1]{Biggins:1992},
but also from our Theorem \ref{Thm:alpha in (1,2)}.
The black dots form $\partial \Lambda^1$ and correspond to the real martingale in what is called the boundary case in the literature.
There is no convergence at the black dots.
The blue lines form $\partial \Lambda^{(1,2)}$ and thus correspond to the case $1 < \alpha < 2$.
Theorem \ref{Thm:alpha in (1,2)} yields that there is convergence to a nontrivial limit on the blue lines.
The red lines including the endpoints form $\partial \Lambda^2$,
which is dealt with in Proposition \ref{Prop:alpha=2}.
Parts (a) and (b) of the proposition yield that there is no convergence on the red arcs without the endpoints
and in the endpoints, respectively.}
\label{fig:Gaussian Lambda}
\end{center}
\end{figure}
\end{example}

We continue with a somewhat pathological example in which $\partial \Lambda \cap \Dom^\comp \not = \varnothing$.

\begin{example}	\label{Exa:pathological}
Let $\calZ = \sum_{k=1}^N \delta_{X_k}$ with $\Prob(N = n(n+1)) = \frac{1}{n(n+1)}$ for all $n \in \N$
and $\Prob(X_k = n \mid N=n(n+1)) = 1$ for $k=1,\ldots,n(n+1)$. Then, for $\theta > 0$,
$m(\theta) = e^{-\theta}/(1-e^{-\theta})$.
It is easily checked that $\E[Z_1(\theta)^2] < \infty$ for all $\theta > 0$.
We now explicitly determine $\Lambda$. To this end,
notice that any $\lambda$ with $\theta > 0$ is in $\Lambda$ iff for some $p\in(1,2]$, we have $m(p\theta)/|m(\lambda)|^p < 1$,
equivalently,
\begin{equation*}	\textstyle
|1-e^{-(\theta+\imag \eta)}|^p = (|1-e^{-(\theta+\imag \eta)}|^2)^{p/2} = (1-2e^{-\theta} \cos\eta + e^{-2\theta})^{p/2} < 1-e^{-p\theta}.
\end{equation*}
Making this inequality explicit in $\cos\eta$ results in
\begin{equation*}	\textstyle
\frac{1}{2} \big(e^{\theta} - (1-e^{-p\theta})^{p/2} e^{\theta} + e^{-\theta}\big) < \cos\eta.
\end{equation*}
Since $p=2$ is the minimizer for the left-hand side as a function of $p \in [1,2]$,
we have $\lambda \in \Lambda$ iff $e^{-\theta} < \cos\eta$.
Thus, since $m(0) = \infty$, it holds that
\begin{equation*}	\textstyle
\Lambda = \{\theta + \imag \eta: \theta > 0, e^{-\theta} < \cos\eta\}
= \bigcup_{n \in \Z} \big(2\pi \imag n + \{\theta + \imag \eta: \theta > 0, |\eta| < \frac\pi2, e^{-\theta} < \cos\eta\}\big).
\end{equation*}

\begin{figure}[h]
\begin{center}
\begin{tikzpicture}[scale=0.4]
		\fill[yellow,samples=256] plot[domain=0:15] (\x, {acos(exp(-\x))/180*pi)})
					-- plot[domain=15:0] (\x, {-acos(exp(-\x))/180*pi)});
		\draw [ultra thick, domain=0:15, samples=256, color=red] plot(\x, {acos(exp(-\x))/180*pi});
		\draw [ultra thick, domain=0:15, samples=256, color=red] plot(\x, {-acos(exp(-\x))/180*pi});
		\fill[yellow,samples=256] plot[domain=0:15] (\x, {acos(exp(-\x))/180*pi)+2*pi})
					-- plot[domain=15:0] (\x, {-acos(exp(-\x))/180*pi)+2*pi});
		\draw [ultra thick, domain=0:15, samples=256, color=red] plot(\x, {acos(exp(-\x))/180*pi+2*pi});
		\draw [ultra thick, domain=0:15, samples=256, color=red] plot(\x, {-acos(exp(-\x))/180*pi+2*pi});
		\draw [help lines] (-0.2,-2) grid (15,8);
		\draw [thick,->] (0,-2.05) -- (0,8.05);
		\draw [thick,->] (-0.25,0) -- (15.05,0);
		\draw [-] (-0.1,pi/2) -- (15.05,pi/2);
		\draw [-] (-0.1,3*pi/2) -- (15.05,3*pi/2);
		\draw (0,pi/2) node[left]{$\frac\pi2$};
		\draw (0,3*pi/2) node[left]{$\frac{3\pi}2$};
		\draw [-] (-0.1,-pi/2) -- (15.05,-pi/2);
		\draw (0,-pi/2) node[left]{$-\frac\pi2$};		
		\draw (15,0) node[below]{$\theta$};
		\draw (0,8) node[left]{$\imag \eta$};
		\filldraw[color=green] (0,0) circle (5pt);
		\filldraw[color=green] (0,2*pi) circle (5pt);
\end{tikzpicture}
\caption{The figure shows $\Lambda$ (in yellow) and $\partial \Lambda$ (the red curves and green dots).
As $\calZ$ is concentrated on $\Z$, the Laplace transform $m$ is $2\pi\imag$-periodic,
hence $\Lambda$ consists of a countable family of shifted copies of the connected part of $\Lambda$
intersecting the halfline $\{\lambda: \theta > 0, \eta = 0\}$.
Convergence of the additive martingales for $\lambda$ from the yellow phase follows from \cite[Theorem 1]{Biggins:1992}
and Theorem \ref{Thm:alpha in (1,2)}.
The green dots correspond to the domain $\partial \Lambda^0 = \partial \Lambda \cap \Dom^\comp$.
The martingale is not defined on this set.
The red curves form $\partial \Lambda^2$, i.e., they correspond to the case $\alpha=2$.
There is no convergence on the red curves by Proposition \ref{Prop:alpha=2}(a) (there is some checking required to see that the proposition applies).}
\label{fig:lattice Lambda}
\end{center}
\end{figure}
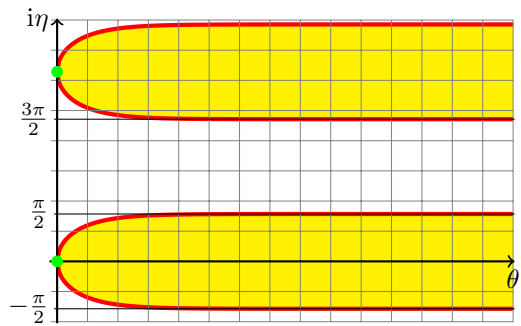

\end{example}

\paragraph{Discussion of the assumptions.}

There is a discussion of the shape of $\Lambda$ on p.\;141 of \cite{Biggins:1992}.
Here, we want to confine ourselves to explaining why one can expect
that \eqref{eq:boundary exhausted} holds, i.e.,
that the boundary is typically exhausted by
$\partial \Lambda^0 \cup \partial \Lambda^1 \cup \partial \Lambda^{(1,2)} \cup \partial \Lambda^2 \cup \partial \Lambda^3$.

\begin{lemma}	\label{Lem:boundary of Lambda_gamma}
Let $\lambda \in \partial \Lambda_\gamma$ for some $\gamma \!\in\! (1,2]$.
If\ ${\Prob(Z_1(\lambda) \in [0,\infty))<1}$, then \eqref{eq:characteristic index} holds with $\alpha \in (1,\gamma]$.
\end{lemma}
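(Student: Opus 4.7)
I would reformulate \eqref{eq:characteristic index} in terms of the convex function
\[ g(p) \defeq \log m(p\theta) - p\log|m(\lambda)|,\qquad p\in[0,\gamma], \]
with $g(p)=+\infty$ where $m(p\theta)=\infty$. Since $\log m$ is convex on its effective domain, so is $g$; moreover, wherever $p\theta$ lies in the interior of $\{\theta'\in\R^d : m(\theta')<\infty\}$, differentiation gives
\[ g'(p) \;=\; -\frac{\E\bigl[\sum_{|u|=1} \theta S(u)\, e^{-p\theta S(u)}\bigr]}{m(p\theta)} - \log|m(\lambda)|. \]
Using $m(\alpha\theta)=|m(\lambda)|^\alpha$ to rewrite the second inequality, \eqref{eq:characteristic index} is exactly the condition $g(\alpha)=0$ and $g'(\alpha)\leq 0$ for some $\alpha\in[1,2]$. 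The plan is therefore to exhibit such an $\alpha$ in the open interval $(1,\gamma]$.

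The first step is to verify $g(1)>0$. The triangle inequality yields $|m(\lambda)|\leq m(\theta)$ and hence $g(1)\geq 0$, with equality forcing $e^{-\imag\eta x}$ to be $\mu$-a.e.\ equal to some constant $c\in\C$ of modulus one. Since $\calZ$ has intensity $\mu$, this equality would propagate to the atoms $X_k$, giving $e^{-\imag\eta X_k}=c$ for every $k$ almost surely, and a substitution would yield $Z_1(\lambda)=Z_1(\theta)\in[0,\infty)$ a.s., contradicting the hypothesis $\Prob(Z_1(\lambda)\in[0,\infty))<1$. Thus $g(1)>0$ strictly.

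Next, I would exploit $\lambda\in\partial\Lambda_\gamma\subseteq\cl(\Lambda_\gamma^3)$ to pick $\lambda_n\to\lambda$ with $\lambda_n\in\Lambda_\gamma$. By the definition of $\Lambda_\gamma^3$, each $\lambda_n$ admits $p_n\in[1,\gamma]$ with $g(p_n;\lambda_n)<0$, where $g(\cdot;\lambda')$ denotes $g$ built from $\lambda'$ in place of $\lambda$. Passing to a subsequence with $p_n\to\alpha'\in[1,\gamma]$ and applying Fatou's lemma to $e^{-p_n\theta_n x}\to e^{-\alpha'\theta x}$ against $\mu$, one obtains $m(\alpha'\theta)\leq\liminf_n m(p_n\theta_n)$, i.e.\ $g(\alpha';\lambda)\leq\liminf_n g(p_n;\lambda_n)\leq 0$. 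On the interval $[1,\alpha']$, the convex function $g(\cdot;\lambda)$ is finite (since $g(1)$ and $g(\alpha')$ are), hence continuous, so the intermediate value theorem produces a smallest $\alpha\in(1,\alpha']$ with $g(\alpha)=0$, and on $[1,\alpha)$ one has $g>0$. Convexity then forces the slope bound
\[ g'(\alpha^-)\;\leq\;\frac{g(\alpha)-g(1)}{\alpha-1}\;=\;-\frac{g(1)}{\alpha-1}\;<\;0, \]
which, by the derivative formula above, is the second inequality in \eqref{eq:characteristic index} for this $\alpha\in(1,\gamma]$.

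\textbf{Main obstacle.} The principal technical point is that the differential identity for $g'$ and the integrals appearing in \eqref{eq:characteristic index} require $\alpha\theta$ to lie in the interior of $\{\theta'\in\R^d : m(\theta')<\infty\}$, so that $g'(\alpha^-)=g'(\alpha)$ in the classical sense. This should follow from $\lambda_n\in\Lambda_\gamma^1$: via the pointwise lower bound $(\sum_k e^{-\theta X_k})^\gamma\geq\sum_k e^{-\gamma\theta X_k}$, one gets $m(\gamma\theta_n)\leq m(\theta_n)^\gamma\E[Z_1(\theta_n)^\gamma]<\infty$ in a whole neighborhood of $\theta$, and convexity of the domain of convergence of $m$ then places the segment $[1,\gamma]\cdot\theta$ in the interior of $\{\theta' : m(\theta')<\infty\}$; nevertheless, some care is needed at the extremal case $\alpha=\gamma$ and when $\lambda$ belongs to $\partial\Lambda_\gamma$ through $\partial\Lambda_\gamma^1$ rather than $\partial\Lambda_\gamma^3$.
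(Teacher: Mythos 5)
Your overall plan coincides with the paper's: pass to $g(p)=\log\left(m(p\theta)/|m(\lambda)|^p\right)$ (the paper works with $f=e^{g}$ directly), approximate $\lambda$ by $\lambda_n\in\Lambda_\gamma$, extract a limit $\alpha'$ of the witnesses $p_n$, and then pick the \emph{smallest} $\alpha\in(1,\alpha']$ with $g(\alpha)=0$ and read off the derivative inequality. Your Fatou step, your justification that $g(1)>0$ is strict, and your parenthetical remark about the integrability needed to interpret the derivative as the expectation in \eqref{eq:characteristic index} are all slightly more explicit than what appears in the paper, which is fine.

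There is, however, one concrete slip. You write that ``convexity then forces the slope bound
$g'(\alpha^-)\le\frac{g(\alpha)-g(1)}{\alpha-1}$.'' Convexity gives the \emph{opposite} inequality: for a convex $g$ the difference quotient $\frac{g(\alpha)-g(p)}{\alpha-p}$ is \emph{nondecreasing} in $p\in[1,\alpha)$, so $g'(\alpha^-)=\sup_{p<\alpha}\frac{g(\alpha)-g(p)}{\alpha-p}\ \ge\ \frac{g(\alpha)-g(1)}{\alpha-1}$. So the argument as stated does not yield $g'(\alpha^-)\le 0$ from the chord slope. What actually does the work is a fact you already recorded in the same sentence, namely that $g>0$ on $[1,\alpha)$ by minimality of $\alpha$: since $g(\alpha)=0$ and $g(p)>0$ for $p<\alpha$, every left difference quotient $\frac{g(\alpha)-g(p)}{\alpha-p}$ is $<0$, and hence so is their supremum $g'(\alpha^-)\le0$. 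No convexity is needed for this step (it is only needed, as you use it elsewhere, to get continuity and the one‑sided derivative's existence). This is exactly the paper's (implicit) argument: ``$\alpha$ minimal with $f(\alpha)=1$'' together with differentiability gives $f'(\alpha)\le0$. With that correction your proof is essentially the paper's proof.
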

\begin{proof}
We conclude $m(\theta) < \infty$ from $\E[Z_1(\theta)^\gamma] < \infty$.
We further have
\begin{equation*}	\textstyle
m(\gamma \theta) = \E\big[ \sum_{|u|=1} e^{-\gamma \theta S(u)} \big] \leq \E\big[ \big(\sum_{|u|=1} e^{-\theta S(u)} \big)^\gamma\big]
= \E[Z_1(\theta)^\gamma] < \infty.
\end{equation*}
Define the functions, $p \mapsto f(p) \defeq m(p\theta)/|m(\lambda)|^p$
and $p \mapsto f_n(p) \defeq m(p\theta_n)/|m(\lambda_n)|^p$,
where $\lambda_n \in \Lambda_\gamma$ are such that $\lambda_n \to \lambda$.
Then $f,f_1,f_2,\ldots$ are finite and continuous on $[1,\gamma]$.
Further, $\lim_{n \to \infty} \lambda_n = \lambda$ implies $f_n \to f$ pointwise on $[1,\gamma]$
and hence $\inf_{1 \leq p \leq \gamma} f(p) \leq 1$.
Let $\alpha \in [1,\gamma]$ be minimal with $f(\alpha)=m(\alpha \theta)/|m(\lambda)|^\alpha = 1$.
This is the first condition of \eqref{eq:characteristic index}.
Clearly, $\alpha > 1$ since $\Prob(Z_1(\lambda) \in [0,\infty)) < 1$.
Thus, $f$ is differentiable at $\alpha$ (from the left if $\alpha = \gamma$)
with $f'(\alpha) \leq 0$,
which translates into the second condition of \eqref{eq:characteristic index}.
\end{proof}

The lemma explains the choice of $\partial \Lambda^{(1,2)}$.
In the situation of the lemma, \eqref{eq:logarithmic moment condition} is automatically fulfilled if $\gamma > \alpha$.
If $\alpha = \gamma$, we have $\E[|Z_1(\lambda)|^\alpha] \leq \E[Z_1(\theta)^\alpha] < \infty$
and \eqref{eq:logarithmic moment condition} thus constitutes only a very mild additional moment assumption.

\section{Proofs of the main results}	\label{sec:proofs}

\paragraph{Many-to-one lemma and auxiliary results for random walks.}

There is a well-known simple formula with far-reaching implications that connects
the branching random walk $(\calZ_n)_{n \in \N_0}$
with an associated standard random walk $(S_n)_{n \in \N_0}$ on $\R$.
This formula is sometimes called the many-to-one lemma and takes the following form here:
\begin{align}	\label{eq:many-to-one}
\E[f(S_0,\ldots,S_n)] = \E\bigg[\sum_{|u|=n} |L(u)|^{\alpha} f(0,-\log(|L(u|_1)|),\ldots,-\log(|L(u)|)) \bigg]
\end{align}
for all nonnegative Borel-measurable functions $f: \R^{n+1} \to \R$.
The formula is used in many (possibly all) papers on branching random walks.
We thus refrain from proving it here.
We just mention an important consequence of \eqref{eq:many-to-one},
namely, choosing $n=1$ and $f(x,y) = y$, whenever $S_1$ or $\sum_{|u|=1} |L(u)|^{\alpha} (-\log(|L(u)|))$ is quasi-integrable, we get
\begin{align}	\label{eq:E[S_1]}	\textstyle
\E[S_1] = \E\big[\sum_{|u|=1} |L(u)|^{\alpha} (-\log(|L(u)|))\big].
\end{align}

\paragraph{Proofs of Theorem \ref{Thm:alpha in (1,2)}, Proposition \ref{Prop:alpha=2} and Proposition \ref{Prop:E[Z_1^gamma]=infty f.a. gamma>1}.}

For the remainder of this section, we denote by $[\cdot]_u$, $u \in \V$ the canonical shift-operators,
i.e., if $\Psi$ is a function of $(\calZ(v))_{v \in \V}$,
then $[\Psi]_u$ is the same function of $(\calZ(uv))_{v \in \V}$.
For $n \in \N$, introduce the $n$th martingale difference $D_n \defeq Z_n - Z_{n-1} = \sum_{|u|=n-1} L(u) ([Z_1]_u-1)$.

\begin{proof}[Proof of Theorem \ref{Thm:alpha in (1,2)}]
Let $\epsilon > 0$ be as in \eqref{eq:logarithmic moment condition}
and choose $\phi$ as in Lemma \ref{Lem:TV function} with $\delta \defeq 1+\epsilon/2$.
We extend $\phi$ to a function on $\C$ by letting $\phi(x + \imag y) \defeq \phi(x) + \phi(y)$, $x,y \in \R$.
Set $\ell(x) \defeq \phi(x)x^{-\alpha}$ for $x > 0$
and notice that condition \eqref{eq:logarithmic moment condition} implies $C_{\phi\ell} \defeq \E[\phi(|Z_1-1| \vee 1) \ell(|Z_1-1| \vee 1)] < \infty$.

For $t > 0$, we write $D_n^{(t)}$ for the truncated martingale differences
\begin{equation*}
D_n^{(t)} = \sum_{|u|=k-1} L(u) \1_{\{|L(u|_j) \leq t \text{ for } j=0,\ldots,k-1\}} ([Z_1]_u-1)
\end{equation*}
and set $Z_0^{(t)} = 0$ and $Z_n^{(t)} \defeq D_1^{(t)}+\dots+D_n^{(t)}$, $n \in \N$.
It is easy to check that $(Z_n^{(t)})_{n \geq 0}$ is a martingale with respect to $(\F_n)_{n \geq 0}$.
Clearly, $Z_n^{(t)} = Z_n$ for all $n \geq 0$ on the set $\{\sup\!_{u \in \Gen} |L(u)| \leq t\}$.
As in the proof of Proposition 2.1 in \cite{Buraczewski+Kolesko:2014}, we infer from \eqref{eq:many-to-one} that
\begin{align}
\Prob\big(\sup\!_{u \in \Gen} |L(u)| > t\big) &\leq \E[\#\{u: |L(u)| > t \text{ and } |L(u|_k)| \leq t \text{ for all } k < |u|\}]	\notag	\\
&\textstyle
= \E \big[\sum_{n \geq 0} e^{\alpha S_n} \1_{\{S_n < -\log t \text{ and } S_k \geq -\log t \text{ for } k=0,\ldots,n-1 \}}\big]
< t^{-\alpha}.		\label{eq:bound on tails of supremum}
\end{align}
In particular, $\lim_{t \to \infty} \Prob(\sup_{u \in \Gen} |L(u)| > t) = 0$.
Therefore, if we show that $(Z_n^{(t)})_{n \geq 0}$ converges almost surely for every $t > 0$,
then we infer that $(Z_n)_{n \geq 0}$ converges almost surely to some finite limit $Z$.

To prove convergence of $(Z_n^{(t)})_{n \geq 0}$,
we apply the Topchi{\u\i}-Vatutin inequality for martingales \cite[Theorem 1]{Alsmeyer+Roesler:2003} twice
(for the second application
note that $D_k^{(t)}$ conditional on $\F_{k-1}$ is a weighted sum of
independent, centered and $\phi$-integrable random variables)
\begin{align*}
\E[\phi(Z_n^{(t)}-1)]
&\leq 2 \sum_{k=1}^n \E[\phi(D_k^{(t)})]		\\
&\leq 4 \sum_{k=1}^n \E \bigg[\sum_{|u|=k-1} \phi(L(u)([Z_1]_u-1)) \1_{\{|L(u|_j)| \leq t \text{ for } j=1,\ldots,k-1\}}\bigg]		\\
&\leq 8 \sum_{k=1}^n \E \bigg[\sum_{|u|=k-1} \phi(|L(u)([Z_1]_u-1)|) \1_{\{|L(u|_j)| \leq t \text{ for } j=1,\ldots,k-1\}}\bigg],
\end{align*}
where we have used that $\phi(z) \leq 2 \phi(|z|)$ for all $z \in \C$.
Using that
\begin{equation}	\label{eq:ell's inequality}
\ell(|zw|) \leq \ell(|z|) \ell(|w|)^2
\end{equation}
for all $z,w \in \C$ with $|w| \geq 1$ and the change of measure \eqref{eq:many-to-one}, we get
\begin{align}
\E[\phi(Z_n^{(t)}-1)]
&\leq 8 C_{\phi\ell} \sum_{k=0}^{n-1} \E \big[\phi(e^{-S_k}) e^{\alpha S_k} \1_{\{S_i \geq -\log t \text{ for } i=1,\ldots,k\}}\big]	\notag	\\
&\leq 8 C_{\phi\ell} \sum_{k=0}^{\infty} \E \big[\ell(e^{-S_k}) \1_{\{S_i \geq -\log t \text{ for } i=1,\ldots,k\}}\big].	\label{eq:outcome of double TV}
\end{align}
To see that the latter series is finite,
let $\tau_0 \defeq 0$ and let $\tau_n$ denote the $n$th strictly descending ladder epoch for the walk $(S_k)_{k \geq 0}$, $n \in \N$.
Notice that $\E[S_1] \geq 0$ by \eqref{eq:E[S_1]}, hence $\tau_n$ may be infinite with positive probability.
Then, for any $k \geq 0$, there exist unique (random) numbers $n \in \N$ and $j \in \N_0$ such that $\tau_{n-1} \leq k = \tau_{n-1}+j < \tau_n$.
In this case, $S_i \geq - \log t$ for all $i=0,\ldots,k$ if and only if $S_{\tau_{n-1}} \geq -\log t$,
and we infer from \eqref{eq:ell's inequality}
\begin{align*}
&\ell\big(e^{-S_k}\big) \1_{\{S_i \geq -\log t \text{ for } i=1,\ldots,k\}}	\\
&\quad	= \ell\big(e^{-(S_{\tau_{n-1}+j}-S_{\tau_{n-1}})} e^{-S{\tau_{n-1}}}\big) \1_{\{S_{\tau_{n-1}} + \log t \geq 0\}}	\\
&\quad	\leq \ell\big(e^{-(S_{\tau_{n-1}+j}-S_{\tau_{n-1}})} \big) \ell\big(e^{-S{\tau_{n-1}}}\big)^2 \1_{\{S_{\tau_{n-1}} + \log t \geq 0\}}	\\
&\quad	\leq \ell(t)^4 \ell\big(e^{-(S_{\tau_{n-1}+j}-S_{\tau_{n-1}})} \big) \ell\big(e^{-(S{\tau_{n-1}}+\log t)}\big)^2 \1_{\{S_{\tau_{n-1}} + \log t \geq 0\}}.
\end{align*}
We thus infer for the infinite series in \eqref{eq:outcome of double TV}:
\begin{align*}
&\sum_{k=0}^{\infty} \E \big[\ell(e^{-S_k}) \1_{\{S_i \geq -\log t \text{ for } i=1,\ldots,k\}}\big]	\\
&~\leq \ell(t)^4 \,
\E \bigg[\sum_{n=1}^\infty \1_{\{\tau_{n-1}<\infty\}} \ell\big(e^{-(S_{\tau_{n-1}}+\log t)}\big)^2  \1_{\{S_{\tau_{n-1}} + \log t \geq 0\}}
\!\!\!\!\! \sum_{j=0}^{\tau_{n}-\tau_{n-1}-1} \ell\big(e^{-(S_{\tau_{n-1}+j} - S_{\tau_{n-1}})}\big) \bigg]	\\
&~=
\ell(t)^4 \,
\E \bigg[\sum_{n=1}^\infty \1_{\{\tau_{n-1}<\infty\}}\ell\big(e^{-(S_{\tau_{n-1}}+\log t)}\big)^2  \1_{\{S_{\tau_{n-1}} + \log t \geq 0\}}\bigg]
\E\bigg[\sum_{j=0}^{\tau_{1}-1} \ell\big(e^{-S_j}\big) \bigg],
\end{align*}
where we have used the strong Markov property for the random walk $(S_k)_{k \geq 0}$.
Let $\sigma_0 \defeq 0$ and $\sigma_n$ the $n$th weakly ascending ladder epoch of the walk
$(S_k)_{k \geq 0}$, i.e., $\sigma_n \defeq \inf\{k > \sigma_{n-1}: S_k \geq S_{\sigma_{n-1}}\}$, $n \in \N$.
Then the duality lemma gives
\begin{equation}	\label{eq:duality lemma}
\E\bigg[\sum_{j=0}^{\tau_{1}-1} \ell\big(e^{-S_j}\big) \bigg]
= \E\bigg[\sum_{n=0}^{\infty} \ell\big(e^{-S_{\sigma_n}}\big) \bigg].
\end{equation}
By the choice of $\phi$ (see Lemma \ref{Lem:TV function}),
$\ell(e^{-x})$ is decreasing and
$\ell(e^{-x}) \sim c^{-1} x^{-1-\epsilon/2}$ as $x \to \infty$.
Thus, $x \mapsto \ell(e^{-x})\1_{[0,\infty)}$ is directly Riemann integrable.
Now $\E[S_1] \geq 0$ implies $\Prob(\sigma_n<\infty)$ for all $n \in \N$.
Hence $(S_{\sigma_n})_{n \geq 0}$ is a random walk drifting to $+\infty$.
Taken together, we infer that the expectation in \eqref{eq:duality lemma} is finite.
Again from the direct Riemann integrability of $x \mapsto \ell(e^{-x})\1_{[0,\infty)}$,
we conclude that
\begin{equation*}
\sup_{t > 0}\E \bigg[\sum_{n=1}^\infty \1_{\{\tau_{n-1}<\infty\}} \ell\big(e^{-(S_{\tau_{n-1}}+\log t)}\big)^2  \1_{\{S_{\tau_{n-1}} + \log t \geq 0\}}\bigg] < \infty.
\end{equation*}
So far we have shown that there is a constant $C > 0$, not depending on $t$,
such that
\begin{equation}	\label{eq:bounds on phi-moments}	\textstyle
\sup_{n \geq 1} \E[\phi(Z_n^{(t)}-1)] \leq C \ell(t)^4
\end{equation}
for all $t > 0$. This implies that $Z_n^{(t)} \to Z^{(t)}$ almost surely for some random variable $Z^{(t)}$
and, upon letting $t \to \infty$, also $Z_n \to Z$ almost surely for $Z \defeq \lim_{t \to \infty} Z^{(t)}$.
What is more,
\begin{align*}
\Prob(|Z_n-1| > t) &	\textstyle
\leq	\Prob(\phi(|Z_n-1|) > \phi(t),\;\sup_{u \in \Gen} |L(u)| \leq t) + \Prob(\sup_{u \in \Gen} |L(u)| > t)	\\
&	\textstyle
\leq	\Prob(\phi(|Z_n^{(t)}-1|) > \phi(t)) + t^{-\alpha}
\leq	\phi(t)^{-1} \sup_{n \geq 1} \E[\phi(Z_n^{(t)}-1)] + t^{-\alpha}	\\
&	\textstyle
\leq	t^{-\alpha} (C \ell(t)^3 +1)
\end{align*}
for all sufficiently large $t$. As $\ell(t)$ is of the order $\log^{1+\epsilon/2} t$ as $t \to \infty$,
the bound above implies that $(|Z_n-1|^p)_{n \geq 0}$ is uniformly integrable for all $p<\alpha$.
Consequently, $Z_n \to Z$ in $L^p$ for all $p<\alpha$. In particular, $\E[Z] = 1$.
\end{proof}

Proposition \ref{Prop:alpha=2} and Proposition \ref{Prop:E[Z_1^gamma]=infty f.a. gamma>1}
can be proved using minor modifications of the corresponding results in \cite{Iksanov+Meiners:2015}.
For the reader's convenience, we sketch the corresponding arguments in the given context.

Both propositions are based on the following lemma.

\begin{lemma}	\label{Lem:tail bounds}
Suppose that $\Prob(N < \infty)=1$ and that \eqref{eq:characteristic index} holds.
Further, assume that
$\E\big[\sum_{|u|=1} |L(u)|^\alpha \log(|L(u)|)\big] \in (-\infty,0)$
and $\E[W_1 \log_+ W_1] < \infty$,
or that \eqref{eq:m(vartheta)<infty} holds.
Then $Z_n \to Z$ in probability as $n \to \infty$ implies $\Prob(|Z| \geq t) = o(t^{-p})$ as $t \to \infty$
and, in particular, $\E[|Z_1|^p] < \infty$ for every $p \in (1,\alpha)$.
\end{lemma}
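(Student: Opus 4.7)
The plan is to exploit the stochastic fixed-point equation satisfied by $Z$ and to reduce the tail estimate to a renewal-theoretic moment bound via the many-to-one lemma, following the scheme of \cite{Iksanov+Meiners:2015}. Since $\Prob(N<\infty)=1$ and $Z_n\to Z$ in probability, passing to the limit in the finite-sum recursion $Z_n = \sum_{|v|=1} L(v)[Z_{n-1}]_v$ yields $Z = \sum_{|v|=1} L(v)[Z]_v$ a.s., and iterating gives $Z = \sum_{|u|=n} L(u)[Z]_u$ a.s.\ for every $n$, with $([Z]_u)_{|u|=n}$ i.i.d.\ copies of $Z$ independent of $\F_n$. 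The whole conclusion then reduces to proving $\E[|Z|^q]<\infty$ for every $q\in(1,\alpha)$: the tail $\Prob(|Z|\geq t) = o(t^{-p})$ for $p\in(1,\alpha)$ follows by Markov's inequality with exponent $q\in(p,\alpha)$, and Jensen applied to the martingale identity $Z_1 = \E[Z\mid\F_1]$ (valid once $L^1$-convergence of $Z_n$ has been established) gives $\E[|Z_1|^q]\leq\E[|Z|^q]<\infty$; $L^1$-convergence in turn follows from the uniform $L^q$-bound via Doob's inequality together with \eqref{eq:bound on tails of supremum}.

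For the $L^q$-bound itself, I would truncate the copies $[Z]_u$ at a level $a>0$ in the iterated fixed-point equation and apply the Topchi{\u\i}--Vatutin inequality to the conditionally centred sum; combined with the change-of-measure \eqref{eq:many-to-one}, this converts the problem into establishing the finiteness, uniformly in the truncation level, of a renewal-type series of the form
\[
\sum_{k\geq 0}\E\bigl[g(S_k)\1_{\{S_i\geq 0,\, i\leq k\}}\bigr]
\]
for the associated random walk $(S_k)$ with drift $\E[S_1]\geq 0$ given by \eqref{eq:E[S_1]} and a suitable function $g$ decaying at $+\infty$. A bootstrap on the truncation level $a$, combined with the tail bound \eqref{eq:bound on tails of supremum} on $\sup_u|L(u)|$ to handle the complementary event, then yields $\E[|Z|^q]<\infty$.

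The main obstacle is to establish summability of the renewal series under each of the two alternative moment hypotheses. Under (3a), strict positivity $\E[S_1]>0$ combined with $\E[W_1\log_+W_1]<\infty$ yields summability via the Sparre Andersen / duality argument used in the proof of Theorem~\ref{Thm:alpha in (1,2)}. Under (3b), the hypothesis \eqref{eq:m(vartheta)<infty} translates via many-to-one into $\E[e^{-(\alpha-\vartheta)S_1}]<\infty$, so that $(S_k)$ has a finite moment generating function at a negative point, giving geometric decay of the series. Both arguments appear in detail in \cite{Iksanov+Meiners:2015}; in the complex setting the triangle inequality reduces them to the real case.
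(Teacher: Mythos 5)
Your opening step is correct and matches the paper: $Z_n\to Z$ in probability together with $\Prob(N<\infty)=1$ gives the fixed-point identity $Z=\sum_{|u|=n}L(u)[Z]_u$ a.s. From that point on, however, your route diverges sharply from the one the paper actually takes, and it has a genuine gap.

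The paper does \emph{not} prove $\E[|Z|^q]<\infty$ by a Topchi{\u\i}--Vatutin/renewal argument. It compares the target tail with an auxiliary object: the Laplace transform $\varphi$ of a nondegenerate \emph{nonnegative} fixed point of the smoothing transform with tilted weights $|L(u)|^\alpha$, for which $1-\varphi(t)$ is regularly varying of index $1$ at $0$. Using multiplicative martingales and the theory of independent infinitesimal triangular arrays (the scheme of \cite[\S3.5]{Iksanov+Meiners:2015}), one shows $\Prob(|Z|>t)=o(1-\varphi(t^{-\alpha}))$, from which both the tail estimate and $\E[|Z|^p]<\infty$, $p<\alpha$, follow. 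That is a structurally different argument, and it is chosen precisely because it only requires $Z$ to exist, not to be integrable in any stronger sense.

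The difficulty with your scheme is circularity. To initialise a Topchi{\u\i}--Vatutin estimate you must feed it a finite $\phi$-moment of the summands; that is exactly what Theorem~\ref{Thm:alpha in (1,2)} uses via \eqref{eq:logarithmic moment condition}. Here, by contrast, the hypotheses of Lemma~\ref{Lem:tail bounds} give no a priori moment of $Z_1$ at all beyond $L^1$ (indeed Proposition~\ref{Prop:E[Z_1^gamma]=infty f.a. gamma>1} is aimed at the regime $\E[|Z_1|^\gamma]=\infty$ for every $\gamma>1$). Truncating the copies $[Z]_u$ at level $a$ does make a $\phi$-moment finite, but then the truncated sum is no longer $Z$, and the correction term involves the probability that some $[Z]_u$, $|u|\le n$, exceeds $a$---which is precisely the tail you are trying to bound, now multiplied by an expected population size. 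You indicate ``a bootstrap'' but do not show how it closes; the bound \eqref{eq:bound on tails of supremum} you invoke controls $\sup_u|L(u)|$, not the maximum of the $[Z]_u$, so it does not handle this complementary event.

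Two further points in your sketch are off. Under \eqref{eq:m(vartheta)<infty}, the many-to-one lemma gives $\E[e^{(\alpha-\vartheta)S_1}]<\infty$, an exponential moment on the \emph{positive} side; since \eqref{eq:m(vartheta)<infty} is compatible with $\E[S_1]=0$, this does not yield geometric decay of a renewal series as you claim. And the final remark that ``in the complex setting the triangle inequality reduces them to the real case'' is precisely what cannot be done: replacing $\sum_{|u|=n}L(u)[Z]_u$ by $\sum_{|u|=n}|L(u)|\,|[Z]_u|$ discards the cancellations that make a complex fixed point exist in the first place. The multiplicative-martingale approach is designed to sidestep this, which is why the paper uses it.
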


The proof of the lemma is lengthy and follows along the lines of
the proofs of \cite[Lemma 4.9]{Alsmeyer+Meiners:2013} and \cite[Lemma 4.7]{Iksanov+Meiners:2015}.
We will therefore only give a sketch of the proof.

\begin{proof}[Sketch of the proof]
First notice that if $Z_n \to Z$ in probability as $n \to \infty$,
then $Z$ satisfies
\begin{equation}	\label{eq:endogenous FPE}	\textstyle
Z = \sum_{|u|=n} L(u) [Z]_u	\quad	\text{almost surely}
\end{equation}
for every $n \in \N$.
This means that $Z$ is a fixed point of a smoothing transformation.
The proof of Lemma \ref{Lem:tail bounds} is based on a comparison
of the survival probability $\Prob(|Z| > t)$ with the
Laplace transform $\varphi$ at $0$ solving the functional equation of a suitable smoothing transform.
To be more precise, there exists a probability measure on $[0,\infty)$, non-degenerate at $0$,
such that its Laplace transform $\varphi$ satisfies
\begin{equation}	\label{eq:functional equation}	\textstyle
\varphi(t) = \E\big[\prod_{|u|=1} \varphi(|L(u)|^\alpha t)\big],	\quad	t \geq 0.
\end{equation}
Indeed, $\varphi$ is the Laplace transform of a fixed point of a smoothing transformation on the nonnegative halfline
with tilted weights $|L(u)|^{\alpha}$, $|u|=1$.
Further, $\varphi$ is such that $1-\varphi(t)$ is regularly varying of index $1$ at $0$.
These facts are summarized in \cite{Alsmeyer+al:2012}, see in particular Proposition 2.1 and Theorem 3.1 there.
As in \cite[Section 3.5]{Iksanov+Meiners:2015},
using multiplicative martingales and the theory of independent, infinitesimal triangular arrays,
one can deduce that $\Prob(|Z| > t) = o(1-\varphi(t^{-\alpha}))$ as $t \to \infty$.
Thus, $\Prob(|Z| > t) = o(t^{-p})$ as $t \to \infty$ for every $p \in (1,\alpha)$.
In particular, for any $p \in (1,\alpha)$, we have $\E[|Z|^p]<\infty$
and thus, by standard martingale theory, $\E[|Z_1|^p] = \E[|\E[Z|\F_1]|^p] \leq \E[\E[|Z|^p|\F_1]] = \E[|Z|^p] < \infty$.
\end{proof}

Proposition \ref{Prop:alpha=2} can be proved as Theorem 2.3 in \cite{Iksanov+Meiners:2015}.
We therefore keep the presentation short here.
\begin{proof}[Proof of Proposition \ref{Prop:alpha=2}]
Suppose that $\Prob(N < \infty)=1$ and that \eqref{eq:characteristic index} holds with $\alpha = 2$
and that one of the additional conditions holds.
Further, assume for a contradiction that $Z_n \to Z$ in probability as $n \to \infty$.
Then we can apply Lemma \ref{Lem:tail bounds}
and deduce that $\E[|Z|^p]<\infty$ for every $p \in (1,\alpha)$.
Standard martingale theory gives $\E[|Z_n-Z|^p] \to 0$ as $n \to \infty$ for each such $p$.
On the other hand, from the Burkholder-Davis-Gundy inequality \cite[Theorem 11.3.1]{Chow+Teicher:1997} and Jensen's inequality
for the concave function $x \mapsto x^{p/2}$ for $x \geq 0$,
we get as in the proof of Theorem 2.3 in \cite{Iksanov+Meiners:2015} that there exists a constant $c_p > 0$ such that
\begin{align*}
\E[|\Real(Z_n)-1|^p + |\Imag(Z_n)|^p]	
&	\textstyle
~\geq c_p \E\big[(\sum_{k=1}^n |\Real(D_k)|^2)^{p/2}+(\sum_{k=1}^n |\Imag(D_k)|^2)^{p/2}\big]	\\
&	\textstyle
~\geq c_p n^{p/2-1} \E\big[(\sum_{k=1}^n |\Real(D_k)|^p)+(\sum_{k=1}^n |\Imag(D_k)|^p)\big].
\end{align*}
Here, using that given $\F_{k-1}$, $D_k$ is a weighted sum of centered i.i.d.\ random variables,
we can again apply the Burkholder-Davis-Gundy inequality and then Jensen's inequality on $\{W_{k-1} > 0\}$ to infer
\begin{align*}	\textstyle
&\E[|\Real(D_k)|^p+|\Imag(D_k)|^p]	\\
&	\textstyle
~\geq c_p \E\big[(\sum_{|u|=k-1} \Real(L(u)([Z_1]_u-1))^2)^{p/2}+(\sum_{|u|=k-1} \Imag(L(u)([Z_1]_u-1))^2)^{p/2}\big]	\\
&	\textstyle
~\geq c_p 2^{p/2-1} \E\big[(\sum_{|u|=k-1} (\Real(L(u)([Z_1]_u-1))^2+\Imag(L(u)([Z_1]_u-1))^2))^{p/2}\big]	\\
&	\textstyle
~= c_p 2^{p/2-1} \E\big[(\sum_{|u|=k-1} |L(u)([Z_1]_u-1)|^2)^{p/2}\big]	\\
&	\textstyle
~\geq c_p 2^{p/2-1} \E[|Z_1-1|^p] \, \E[W_{k-1}^{p/2}].
\end{align*}
Consequently,
\begin{align}
\E[|Z_n-1|^p]
&	\textstyle
\geq 2^{p/2-1} \E[|\Real(Z_n)-1|^p + |\Imag(Z_n)|^p]	\notag	\\
&	\textstyle
\geq c_p^2 2^{p-2} \E[|Z_1-1|^p] n^{p/2-1} \sum_{k=0}^{n-1} \E[W_{k}^{p/2}].	\label{eq:lower bound E[|Z_n-1|^p]}
\end{align}
Condition (i) implies that $W_n \to W$ in $L^1$, see \cite{Lyons:1997}.
Hence the lower bound in \eqref{eq:lower bound E[|Z_n-1|^p]} is of the order $n^{p/2}$ which tends to $+\infty$ as $n \to \infty$.
Condition (ii) implies that $n^{p/4} W_n^{p/2}$, $n \in \N$ converges in distribution as $n \to \infty$
to a non-degenerate limit and is also uniformly integrable, see \cite[Theorem 1.1]{Aidekon+Shi:2014} and \cite[Remark 4.8]{Iksanov+Meiners:2015}.
Thus the lower bound in \eqref{eq:lower bound E[|Z_n-1|^p]} is of the order $n^{p/4}$ and again diverges as $n \to \infty$.
\end{proof}

\begin{proof}[Proof of Proposition \ref{Prop:E[Z_1^gamma]=infty f.a. gamma>1}]
The proposition follows from Lemma \ref{Lem:tail bounds} via contraposition.
\end{proof}

\section{Results for higher dimensions}	\label{sec:similarities}

As already pointed out in the introduction,
to a large extent, our interest in the problem of complex martingale convergence in the branching random walk comes from
its significance in the fixed-point theory for smoothing transformations.
As this theory has applications to problems that go (with regard to the dimension) beyond the complex case,
we will explain how the results obtained above can be extended.

To be precise, fix a dimension $d \in \N$ and let $\Sim$ denote the set of real $d \times d$ similarity matrices.
A similarity matrix is a matrix that can be written as the product of a positive scaling factor and an orthogonal $d \times d$ matrix.
Now suppose that $\calZ = \sum_{u=1}^N \delta_{L(u)}$ is a point process on $\Sim$, i.e., the $L(u)$, $u=1,\ldots,N$ are similarity matrices.
A fixed point of the smoothing transform associated with $\calZ$ is a $d$-dimensional random vector $X$
satisfying
\begin{equation}	\label{eq:fixed-point equation of smoothing trafo}	\textstyle
X \stackrel{\mathrm{law}}{=} \sum_{u=1}^N L(u) X_u
\end{equation}
where $X_1,X_2,\ldots$ are i.i.d.\ copies of $X$ and independent of $\calZ$.
An important problem arising when solving \eqref{eq:fixed-point equation of smoothing trafo}
is the following. Take independent copies $\calZ(u)$, $u \in \V$ of $\calZ$ on a suitable probability space $(\Omega,\A,\Prob)$
and define $\Gen_n$, $\Gen$, $\F_n$ in obvious analogy to the corresponding objects defined in Section \ref{sec:main results}.
Define $L(\varnothing)$ to be the $d \times d$ identity matrix,
and, for $uj \in \Gen$, define recursively
$L(uj) \defeq L(u) [L(j)]_{u}$.
Now suppose that the matrix $\E[\sum_{|u|=1} L(u)]$ has finite entries only and that
it has a right eigenvector $0 \not = w \in \R^d$ to the eigenvalue $1$.
Then the sequence $(Z_n w)_{n \in \N_0}$ defined via
\begin{equation*}	\textstyle
Z_n w \defeq \sum_{|u|=n} L(u) w,	\quad	n \in \N_0
\end{equation*}
defines a $d$-dimensional martingale with respect to $(\F_n)_{n \geq 0}$.
In slight abuse of common notation, we write $|\cdot|$ not only for the standard Euclidean norm in $\R^d$ but also for the usual matrix norm.
Since we only work with similarity matrices, this should cause no confusion.
Then condition \eqref{eq:characteristic index} makes perfect sense in the given situation,
and the following result can be proved along the lines of the proof of Theorem \ref{Thm:alpha in (1,2)}:

\begin{theorem}	\label{Thm:alpha in (1,2), similarity case}
Suppose that \eqref{eq:characteristic index} holds with $\alpha \in (1,2)$
and that \eqref{eq:logarithmic moment condition} holds with $Z_1$ replaced by $Z_1 w$.
Then $(Z_n w)_{n \geq 0}$ converges almost surely and in $L^p$ for every $p < \alpha$
to a non-degenerate limit $Z^w$.
\end{theorem}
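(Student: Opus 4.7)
The plan is to follow the proof of Theorem \ref{Thm:alpha in (1,2)} essentially line by line, using two key structural features of the similarity setting: first, any similarity matrix $L$ satisfies the norm-multiplicativity $|Lv| = |L|\,|v|$ for every $v \in \R^d$, which is exactly the feature of the complex modulus that was used in the scalar/complex case; and second, the $\R^d$-valued martingale $(Z_n w)_{n \geq 0}$ can be analyzed coordinate by coordinate, each coordinate being a real $(\F_n)$-martingale to which the Topchi\u\i-Vatutin inequality applies.

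First, I would fix $\phi$ as in Lemma \ref{Lem:TV function} with $\delta = 1 + \veps/2$ and extend it to $\R^d$ by $\phi(v) \defeq \sum_{i=1}^d \phi(v_i)$. Since each $|v_i| \leq |v|$ and $\phi$ is even and non-decreasing on $[0,\infty)$, one obtains the analogue $\phi(v) \leq d\,\phi(|v|)$ of the inequality $\phi(z) \leq 2\phi(|z|)$ used in the complex case. With $\ell(x) \defeq \phi(x)x^{-\alpha}$, the modified hypothesis \eqref{eq:logarithmic moment condition} gives $C_{\phi\ell} \defeq \E[\phi(|Z_1w-w|\vee 1)\ell(|Z_1w-w|\vee 1)] < \infty$. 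I would then introduce the truncated martingale differences
\begin{equation*}
D_n^{w,(t)} \defeq \sum_{|u|=n-1} L(u)\,\1_{\{|L(u|_j)| \leq t \text{ for } j=0,\ldots,n-1\}}([Z_1]_u w - w)
\end{equation*}
and set $Z_n^{w,(t)} \defeq D_1^{w,(t)} + \cdots + D_n^{w,(t)}$. The bound $\Prob(\sup_{u \in \Gen}|L(u)| > t) \leq t^{-\alpha}$ in \eqref{eq:bound on tails of supremum} involves only the scalar norms $|L(u)|$ and carries over verbatim, so it again suffices to establish almost sure convergence of $Z_n^{w,(t)}$ for every fixed $t > 0$.

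Next, I would apply the Topchi\u\i-Vatutin inequality to each real coordinate $(Z_n^{w,(t)})_i$; the $k$th increment is, conditional on $\F_{k-1}$, a weighted sum of independent centered real random variables indexed by $\{u:|u|=k-1\}$, so a second application of Topchi\u\i-Vatutin is available as in the original argument. Summing over $i=1,\dots,d$ and invoking both $\phi(v) \leq d\phi(|v|)$ and the similarity identity $|L(u)v| = |L(u)|\,|v|$ yields, with a constant $C_d$ depending only on $d$,
\begin{equation*}
\E[\phi(Z_n^{w,(t)})]
\leq C_d \sum_{k=0}^{n-1} \E\bigg[\sum_{|u|=k} \phi\big(|L(u)|\,|[Z_1]_u w - w|\big)\,\1_{\{|L(u|_j)| \leq t \text{ for } j \leq k\}}\bigg].
\end{equation*}
Applying \eqref{eq:ell's inequality} and the many-to-one lemma \eqref{eq:many-to-one} (which is insensitive to the similarity structure, being phrased purely in terms of $|L(u)|^\alpha$) reduces the right-hand side to exactly the random-walk expression handled in the original proof. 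The ladder-epoch decomposition, the duality lemma \eqref{eq:duality lemma}, direct Riemann integrability of $x \mapsto \ell(e^{-x})\1_{[0,\infty)}(x)$, and the drift condition $\E[S_1] \geq 0$ from \eqref{eq:E[S_1]} then deliver $\sup_n \E[\phi(Z_n^{w,(t)})] \leq C\,\ell(t)^4$, the analogue of \eqref{eq:bounds on phi-moments}.

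The conclusion is now identical to that of Theorem \ref{Thm:alpha in (1,2)}: almost sure convergence of $Z_n^{w,(t)}$ for each $t$ extends to almost sure convergence $Z_n w \to Z^w$ by letting $t \to \infty$, while the tail estimate $\Prob(|Z_n w - w| > t) \leq \phi(t)^{-1}\sup_n \E[\phi(Z_n^{w,(t)})] + t^{-\alpha}$ together with $\phi(t) \asymp t^\alpha\ell(t)^{-1}$ yields uniform integrability of $(|Z_n w|^p)_n$ for every $p < \alpha$, hence $L^p$-convergence and $\E[Z^w] = w$. The main obstacle is purely bookkeeping: one must check that replacing complex multiplication by the action of a similarity matrix preserves both the coordinate-wise martingale structure needed for the double Topchi\u\i-Vatutin step and the norm-multiplicativity needed to factor $|L(u)([Z_1]_u w - w)|$ as $|L(u)|\cdot|[Z_1]_u w - w|$; once this is verified, no new probabilistic input beyond what is already in the proof of Theorem \ref{Thm:alpha in (1,2)} is required.
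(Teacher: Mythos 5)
Your proposal is correct and follows exactly the route the paper intimates when it says Theorem \ref{Thm:alpha in (1,2), similarity case} ``can be proved along the lines of the proof of Theorem~\ref{Thm:alpha in (1,2)}'': extend $\phi$ coordinate-wise to $\R^d$, exploit that $|L(u)v| = |L(u)|\,|v|$ and $|L(uv)| = |L(u)|\,|[L(v)]_u|$ for similarity matrices so the many-to-one lemma and the tail bound \eqref{eq:bound on tails of supremum} carry over verbatim, and run the double Topchi\u\i--Vatutin argument coordinate by coordinate. One small slip: since $\ell(x) = \phi(x)x^{-\alpha}$, one has $\phi(t) \asymp t^{\alpha}\ell(t)$, not $t^{\alpha}\ell(t)^{-1}$, but this does not affect the conclusion because $\ell$ grows only like a power of $\log$, so $\phi(t)^{-1}\ell(t)^4 + t^{-\alpha} \asymp t^{-\alpha}\ell(t)^3$ is still $o(t^{-p})$ for every $p<\alpha$.
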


This improves Proposition 1.1(c) in \cite{Meiners+Mentemeier:2017}
in two ways. First of all, the assumptions on finite absolute moments of $Z_1 w$ are relaxed.
Second, the theorem above includes the boundary case $m'(\alpha)=0$,
which is not covered in \cite{Meiners+Mentemeier:2017}.	

Also, with $W_n \defeq \sum_{|u|=n} |L(u)|^{\alpha}$, $n \in \N_0$,
the analog of Lemma \ref{Lem:tail bounds} holds in the given context
and thus allows to conclude the analogs of
Propositions \ref{Prop:alpha=2} and \ref{Prop:E[Z_1^gamma]=infty f.a. gamma>1}
with $Z_n$ replaced by $Z_n^w$, $n \in \N_0$.
We refrain from reformulating the corresponding results in the more general context.

\begin{appendix}
\section{Auxiliary results}	\label{sec:appendix}

\begin{lemma}	\label{Lem:TV function}
Let $\alpha \in (1,2)$, $\delta > 0$.
Then there is an even convex function $\phi: \R \to [0,\infty)$
with $\phi(0)=0$
having a concave derivative on $(0,\infty)$
such that $\ell(x) \defeq \phi(x)x^{-\alpha}$, $x > 0$ 
is increasing and satisfies the following assertions:
\begin{itemize}	\itemsep-1pt
	\item[(i)]	
		For all $x > 0$, we have $\ell(x^{-1}) = \ell(x)^{-1} > 0$.
	\item[(ii)]
		There exists a constant $c > 0$ such that $\ell(x) \sim c \log^{\delta}(x)$ as $x \to \infty$.
	\item[(iii)]
		 For all $x\geq1$ and $y > 0$, we have $\ell(xy) \leq \ell(x)^2\ell(y)$.
\end{itemize}
\end{lemma}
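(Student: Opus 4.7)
The plan is to reduce the functional-equation structure to an additive one by writing $\ell(x) = g(\log x)$, so that (i) becomes the oddness of $\log g$ and (ii) becomes $g(t) \sim c t^\delta$ as $t \to \infty$. A convenient smooth choice is
\begin{equation*}
g_\kappa(t) \defeq e^{\delta \operatorname{arcsinh}(\kappa t)} = \bigl(\kappa t + \sqrt{\kappa^2 t^2 + 1}\bigr)^{\delta}
\end{equation*}
for a small parameter $\kappa > 0$ to be fixed below. This $g_\kappa$ is smooth, strictly positive, strictly increasing, satisfies $g_\kappa(0) = 1$, and obeys $g_\kappa(t)g_\kappa(-t) = 1$ since $(\kappa t + \sqrt{\kappa^2 t^2 + 1})(-\kappa t + \sqrt{\kappa^2 t^2 + 1}) = 1$; moreover $g_\kappa(t) \sim (2\kappa t)^\delta$ as $t \to \infty$, giving (ii) with $c = (2\kappa)^\delta$. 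I would then set $\ell(x) \defeq g_\kappa(\log x)$ and $\phi(x) \defeq |x|^\alpha \ell(|x|)$ with $\phi(0) \defeq 0$. Properties (i) and (ii) then hold by construction, and $\ell, \phi$ are increasing on their respective domains.

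For (iii), after taking logarithms the claim $\ell(xy) \leq \ell(x)^2 \ell(y)$ for $x \geq 1,\ y > 0$ becomes
\begin{equation*}
\operatorname{arcsinh}(\kappa(a + b)) \leq 2 \operatorname{arcsinh}(\kappa a) + \operatorname{arcsinh}(\kappa b),
\end{equation*}
with $a = \log x \geq 0$ and $b = \log y \in \R$. When $b \geq 0$ the stronger inequality without the factor $2$ follows from subadditivity of $\operatorname{arcsinh}$ on $[0,\infty)$, itself a consequence of concavity together with $\operatorname{arcsinh}(0) = 0$. When $b < 0$ I would split into $a + b \geq 0$ and $a + b < 0$ and in each subcase reduce, via monotonicity, oddness, and the same subadditivity, to an instance already covered, using $\operatorname{arcsinh}(\kappa a) \geq 0$ as the slack absorbing the remainder.

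For the derivative conditions on $\phi$, writing $\phi(x) = x^\alpha g_\kappa(\log x)$ on $(0,\infty)$ yields
\begin{equation*}
\phi''(x) = x^{\alpha - 2} G(\log x), \qquad G(t) \defeq \alpha(\alpha - 1) g_\kappa(t) + (2\alpha - 1) g_\kappa'(t) + g_\kappa''(t),
\end{equation*}
so convexity of $\phi$ amounts to $G \geq 0$ and concavity of $\phi'$ amounts to $G'(t) \leq (2 - \alpha) G(t)$ for every $t$. From $g_\kappa'/g_\kappa = \delta\kappa / \sqrt{\kappa^2 t^2 + 1}$ and the analogous formula for $g_\kappa''/g_\kappa$, both ratios are bounded uniformly in $t$ by $O(\kappa)$ and $O(\kappa^2)$ respectively. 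The main obstacle is the uniform verification of the concavity inequality $G' \leq (2-\alpha)G$; for $\kappa > 0$ sufficiently small (depending only on $\alpha$ and $\delta$) the lower-order terms in $G$ are dominated by $\alpha(\alpha - 1) g_\kappa$, and after dividing by $g_\kappa$ the desired inequality $G'/G \leq 2 - \alpha$ reduces to a small perturbation of the trivial bound $O(\kappa) \leq 2 - \alpha$, which holds. Finally, one extends $\phi$ to $\R$ by setting $\phi(-x) \defeq \phi(x)$; evenness is consistent with convexity because $\phi'(0^+) = 0$ (since $\phi(x) = O(x^\alpha)$ with $\alpha > 1$ near $0$).
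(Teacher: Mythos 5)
Your proposal is correct, and it realizes the same high-level strategy as the paper but with a cleaner instantiation. The paper also constructs $\ell$ as the exponential of a running integral of a small, even function that is decreasing on $[0,\infty)$ and asymptotic to $\delta/u$: specifically $\ell(x) = \exp\bigl(\int_0^{\log x} \varepsilon(u)\,du\bigr)$ with $\varepsilon(u) = \delta u_0^{-1}$ for $|u| \le u_0$ and $\delta/|u|$ for $|u| > u_0$, then takes $u_0$ large to make $\varepsilon, \varepsilon', \varepsilon''$ uniformly small. Your $\ell(x) = \exp\bigl(\delta\operatorname{arcsinh}(\kappa\log x)\bigr)$ is the same thing with log-derivative $\widetilde\varepsilon(u) = \delta\kappa(1+\kappa^2 u^2)^{-1/2}$, and $\kappa$ small plays the role of $u_0$ large. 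The difference that matters is smoothness: the paper's $\varepsilon$ has a kink at $u_0$, so the proof must separately verify $\phi''(e^{u_0}{-}) > \phi''(e^{u_0}{+})$ to conclude concavity of $\phi'$ across the non-smooth point, whereas your $g_\kappa$ is $C^\infty$ and the global inequality $G' \le (2-\alpha)G$ follows at once from the uniform bounds $g_\kappa^{(j)}/g_\kappa = O(\kappa^j)$. (The step should really be phrased as $G/g_\kappa = \alpha(\alpha-1)+O(\kappa)$ bounded away from $0$ while $G'/g_\kappa = O(\kappa)$, rather than "dividing by $g_\kappa$ gives $G'/G$", but this is cosmetic.) Your argument for (iii), reducing to subadditivity of $\operatorname{arcsinh}$ on $[0,\infty)$ via concavity and $\operatorname{arcsinh}(0)=0$, is the direct analogue of the paper's use of subadditivity of $x\mapsto\int_0^x\varepsilon$. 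Both proofs close the even extension to $\R$ via $\phi'(0^+)=0$; you might state explicitly that $\phi'(0^+)=\lim_{t\to-\infty}e^{(\alpha-1)t}(\alpha g_\kappa(t)+g_\kappa'(t))=0$ because $\alpha>1$ and $g_\kappa, g_\kappa'\to 0$ there.
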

\begin{proof}
We set
$\varepsilon(u) \defeq \delta u_0^{-1} \1_{[0,u_0]}(|u|) + \delta |u|^{-1} \1_{(u_0,\infty)}(|u|)$
for some $u_0 > 0$ to be specified below,
and
\begin{equation*}	\textstyle
\ell(x)	\defeq \exp \big(\int_0^{\log x} \varepsilon(u) \, \du \big),	\quad	x > 0
\end{equation*}
where the integral has to be understood as an (oriented) Riemann integral.
We then define $\phi(0) \defeq 0$ and $\phi(x) \defeq |x|^{\alpha} \ell(|x|)$ for $x \not = 0$.
Then $\ell$ satisfies (i) since $\varepsilon$ is symmetric around $0$.
From $\varepsilon(u) = \delta u^{-1}$ for all $u \geq u_0$ we conclude that (ii) holds.
For the proof of (iii), first notice that
since $\varepsilon$ is decreasing on $[0,\infty)$, the integral $\int_0^x \varepsilon(u) \, \du$
is subadditive as a function of $x \geq 0$. Consequently, $\ell(xy) \leq \ell(x) \ell(y) \leq \ell(x)^2\ell(y)$ for all $x,y \geq 1$.
Now suppose $x \geq 1$ and $y < 1$. We distinguish two cases.
If $xy < 1$, then
\begin{equation*}	\textstyle
\int_0^{\log(xy)} \varepsilon(u) \, \du = \int_{\log y}^{\log x+ \log y} \varepsilon(u) \, \du - \int_{\log y}^0 \varepsilon(u) \, \du
\leq \int_{0}^{\log x} \varepsilon(u) \, \du + \int_0^{\log y} \varepsilon(u) \, \du,
\end{equation*}
where we have used that $\varepsilon$ is symmetric and decreasing on $[0,\infty)$.
Again, we conclude that $\ell(xy) \leq \ell(x) \ell(y) \leq \ell(x)^2\ell(y)$.
Next, suppose $xy \geq 1$. Then
\begin{equation*}	\textstyle
\int_0^{\log(xy)} \varepsilon(u) \, \du \leq \int_0^{\log x} \varepsilon(u) \, \du \leq 2 \int_0^{\log x} \varepsilon(u) \, \du + \int_0^{\log y} \varepsilon(u) \, \du,
\end{equation*}
hence $\ell(xy) \leq \ell(x)^2\ell(y)$.

Finally, we have to show that we can choose $u_0 > 0$ such that $\phi$ is convex on $\R$ with concave derivative on $(0,\infty)$.
Clearly, $\phi$ is continuously differentiable with derivative
\begin{align*}
\phi'(x)	&= x^{\alpha-1} \ell(x) (\alpha + \varepsilon(\log x)),	\quad	t > 0.
\end{align*}
As $\varepsilon$ is smooth on $(0,\infty) \setminus \{u_0\}$, so is $\phi$,
and we get for the higher order derivatives:
\begin{align*}
\phi''(x)	&= x^{\alpha-2} \ell(x) (\alpha (\alpha-1) + (2\alpha-1) \varepsilon(\log x) + \varepsilon^2(\log x) + \varepsilon'(\log x)),	\\
\phi'''(x)	&= x^{\alpha-3} \ell(x) \big(\alpha(\alpha-1)(\alpha-2) + p_0(\varepsilon(\log x)) + p_1(\varepsilon'(\log(x))) + p_2(\varepsilon''(\log(x)))\big)
\end{align*}
for $x > 0$, $x \not = u_0$,
where $p_0,p_1,p_2$ are polynomials with $p_j(0)=0$ for $j=0,1,2$ and coefficients depending only on $\alpha$.
Consequently, there exists a constant $\eta > 0$
such that $\phi''(x) > 0$ and $\phi'''(x) < 0$ for all $x > 0$, $x \not = u_0$ such that
$\max\{|\varepsilon(x)|,|\varepsilon'(x)|,|\varepsilon''(x)|\} \leq \eta$.
Now fix $u_0 > 0$ so large that $\max\{|\varepsilon(u)|,|\varepsilon'(u)|,|\varepsilon''(u)|\} \leq \eta$ for all $u \geq u_0$.
Then $\phi'''(x) < 0$ for all $x > 0$, $x \not = e^{u_0}$, hence $\phi''$ is strictly decreasing on $(0,e^{u_0})$ and $(e^{u_0},\infty)$.
From the explicit expression for $\phi''$ above, we conclude that $\phi''(u_0-) > \phi''(u_0^+)$
(the difference between these expressions is given exactly by the difference of the limits of $\varepsilon'(\log x)$ as $x \uparrow e^{u_0}$, which is $0$,
and as $x \downarrow e^{u_0}$, which is $-\delta u_0^{-2} < 0$).
Thus $\phi'$ is (strictly) concave.
Analogously, we infer that $\phi$ is (strictly) convex on $[0,\infty)$.
\end{proof}

\end{appendix}


\ACKNO{We are grateful to Zakhar Kabluchko for giving us valuable input.}

\bibliographystyle{plain}
\bibliography{BRW}

\end{document}